\documentclass[11pt]{amsart}
 \usepackage{amssymb,amsthm}
 \usepackage{amsmath,amssymb,amsfonts,amsthm} 
\usepackage{color}                    
\usepackage{subfigure}  
\usepackage{tikz}
\usepackage{graphics}                 
\usepackage{verbatim}   

\usetikzlibrary{arrows,chains,matrix,positioning,scopes}
\makeatletter
\tikzset{join/.code=\tikzset{after node path={%
\ifx\tikzchainprevious\pgfutil@empty\else(\tikzchainprevious)%
edge[every join]#1(\tikzchaincurrent)\fi}}}
\makeatother
\tikzset{>=stealth',every on chain/.append style={join},
         every join/.style={->}}
\tikzstyle{labeled}=[execute at begin node=$\scriptstyle,
   execute at end node=$]
\oddsidemargin 0cm
\evensidemargin 0cm

\pagestyle{myheadings}        

\setlength{\baselineskip}{16.0pt}    
\setlength{\parskip}{3pt plus 2pt}
\setlength{\parindent}{20pt}
\setlength{\oddsidemargin}{0.5cm}
\setlength{\evensidemargin}{0.5cm}
\setlength{\marginparsep}{0.75cm}
\setlength{\marginparwidth}{2.5cm}
\setlength{\marginparpush}{1.0cm}
\setlength{\textwidth}{150mm}
\newtheorem{theorem}{Theorem}[section]
\newtheorem{proposition}[theorem]{Proposition}
\newtheorem{corollary}[theorem]{Corollary}
\newtheorem{lemma}[theorem]{Lemma}
\newtheorem{remark}[theorem]{Remark}
\newtheorem{definition}[theorem]{Definition}
\makeatletter
\newcommand{\dotminus}{\mathbin{\text{\@dotminus}}}

\newcommand{\@dotminus}{%
  \ooalign{\hidewidth\raise1ex\hbox{.}\hidewidth\cr$\m@th-$\cr}%
}
\makeatother

\date{\today}
\title{Reduced products of metric structures: a metric Feferman-Vaught theorem}

\author{Saeed Ghasemi}
\address{Institute of Mathematics, Polish Academy of Sciences,
ul. \'Sniadeckich 8,  00-656 Warszawa, Poland}
\email{\texttt{sghasemi@impan.pl}}

\begin{document}
\maketitle
\begin{abstract}
We extend the classical Feferman-Vaught theorem to  logic for metric structures. This implies that the reduced powers of elementarily equivalent structures are elementarily equivalent, and therefore they are isomorphic under the Continuum Hypothesis. We also prove the existence of two separable C*-algebras of the form $\bigoplus_i  M_{k(i)}(\mathbb{C})$ such that the assertion that their coronas are isomorphic is independent from ZFC, which gives the first example of genuinely non-commutative coronas of separable C*-algebras with this property.
\end{abstract}
\section{introduction}
In classical model theory S. Feferman and R.L. Vaught (\cite{F-V} and \cite[\S6.3]{Chang}) gave an effective (recursive) way to determine the satisfaction of formulas in the reduced products of models of the same language, over the ideal of all finite sets, $Fin$. They showed the preservation of the elementary equivalence relation $\equiv$ by arbitrary direct products and also by reduced products over $Fin$. Later Frayne, Morel and Scott (\cite{FMS}) noticed that the results extend to arbitrary reduced products (see also \cite{Yiannis}). Even though reduced products have been vastly studied for various metric structures, e.g., Banach spaces, C*-algebras, etc., unlike classical first order logic, their model theory has not been studied until very recently in \cite{Lopes} and \cite{FaShRig}.  The classical Feferman-Vaught theorem  effectively determines the truth value of a formula $\varphi$ in  reduced products of discrete structures $\{\mathcal{A}_{\gamma}: \gamma \in \Omega\}$ over an ideal $\mathcal{I}$ on $\Omega$, by the truth values of certain formulas in the models $\mathcal{A}_{\gamma}$ and in the Boolean algebra $P(\Omega)/\mathcal{I}$.

 In the present paper we prove a metric version of the Feferman-Vaught theorem (Theorem \ref{FV}) for reduced products of metric structures, which also implies the preservation of $\equiv$ by arbitrary direct products, ultraproducts and reduced products of metric structures. This answers a question stated in \cite{Lopes}.  We also use this theorem to solve an outstanding problem on coronas of C*-algebras (\S\ref{Matrices}).

 In the last few years the model theory for operator algebras has been developed and specialized from the model theory for metric structures. The reader may refer to \cite{FHS} for a detailed introduction to the model theory of operator algebras, and        \cite{FaICM} for an overview of the applications of logic to operator algebras. This  has proved to be very fruitful as many properties of C*-algebras and tracial von Neumann algebras have equivalent model theoretic reformulations (\cite{FHS}, \cite{FH}, $\dots$). In particular for a sequence of C*-algebras $\{\mathcal{A}_{n}: n\in\mathbb{N}\}$ the \emph{asymptotic sequence algebra} $\prod_{n}\mathcal{A}_{n}/\bigoplus_{n}\mathcal{A}_{n}$ is the reduced product over the Fr\'{e}chet ideal and is an important example of corona algebras. If $\mathcal A_n = \mathcal A$ for all $n $, we write $\ell_{\infty}(\mathcal{A})/c_{0}(\mathcal{A})$ instead of $\prod_{n}\mathcal{A}_{n}/\bigoplus_{n}\mathcal{A}_{n}$
In section \ref{corona-subsection} we briefly recall some of the basic concepts regarding corona algebras and the isomorphisms between them.

In model theory saturated structures have many intriguing properties. 
 A transfinite extension of Cantor's back-and-forth method shows that for any uncountable cardinal $\kappa$, two elementarily equivalent $\kappa$-saturated 
  metric structures (in the same countable language $\mathcal L$)  of density character $\kappa$, are isomorphic (see e.g., \cite{FHS} or \cite{Ben}). Recall that for a topological space $X$, the density character of $X$ is the smallest cardinality of a dense subset of $X$. 
   In \cite{FaShRig} Farah-Shelah (Theorem \ref{countable saturation}) showed that for any sequence  $\{\mathcal{A}_{n}: n\in\mathbb{N}\}$ of metric structures the reduced product  $\prod_n \mathcal A_n /\bigoplus_n \mathcal A_n$  over the Fr\'{e}chet ideal is  $\aleph_1$-saturated (we will usually say ``countably saturated'' instead of  $\aleph_1$-saturated). Hence the question whether two such reduced products of the density character $\aleph_1$ are isomorphic reduces to the weaker question of whether they are elementarily equivalent.
   For instance if $\mathcal A$ is separable C*-algebra, 
the asymptotic sequence algebra $\ell_{\infty}(\mathcal{A})/c_{0}(\mathcal{A})$  is  the corona algebra of the separable C*-algebra $\ell_{\infty}(\mathcal{A})$ and therefore it  is non-separable (see \cite[Theorem 2.7]{multiplier}). 
 Thus under the Continuum Hypothesis, the asymptotic sequence algebras of two separable C*-algebras $\mathcal A$ and $\mathcal B$ are isomorphic if and only if they  (the asymptotic sequence algebras) are elementarily equivalent.  One of the main consequences of our metric Feferman-Vaught theorem (Proposition \ref{preservation}) implies that if  separable $\mathcal A$ and $\mathcal B$ are elementarily equivalent, so are their asymptotic sequence algebras and therefore  $\ell_{\infty}(\mathcal{A})/c_{0}(\mathcal{A})$ is isomorphic to $\ell_{\infty}(\mathcal{B})/c_{0}(\mathcal{B})$, under the Continuum Hypothesis.

 We say an ideal $\mathcal{I}$ on $\mathbb{N}$ is \emph{atomless} if the Boolean algebra $P(\mathbb{N})/\mathcal{I}$ is atomless. The metric extension of the Feferman-Vaught theorem is used to prove the following theorem.
 \begin{theorem}\label{ee}
 Suppose $\mathcal{A}$ is a metric $\mathcal{L}$-structure and ideals $\mathcal{I}$ and $\mathcal{J}$ on $\mathbb{N}$ are atomless, then the reduced powers of $\mathcal{A}$ over $\mathcal{I}$ and $\mathcal{J}$ are elementarily equivalent.
\end{theorem}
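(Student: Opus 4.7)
The plan is to derive Theorem \ref{ee} from the metric Feferman--Vaught theorem (Theorem \ref{FV}) together with Tarski's classical fact that the theory of atomless Boolean algebras is complete, so that any two atomless Boolean algebras are elementarily equivalent. The observation driving everything is that when all factors of a reduced power coincide, the ``factor data'' produced by a Feferman--Vaught reduction becomes trivial, and the only remaining ingredient is the first-order theory of the quotient Boolean algebra.

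Fix an $\mathcal{L}$-sentence $\varphi$ and an $\epsilon>0$; since elementary equivalence in the metric setting means equality of the real values of sentences, it suffices to show $|\varphi^{\mathcal{A}^{\mathbb{N}}/\mathcal{I}}-\varphi^{\mathcal{A}^{\mathbb{N}}/\mathcal{J}}|\le 2\epsilon$ and then let $\epsilon\to 0$. Theorem \ref{FV} should produce, from $\varphi$ and $\epsilon$, a finite list of $\mathcal{L}$-formulas $\psi_1,\dots,\psi_n$, rational thresholds $r_{jk}$, and a first-order formula $\Theta$ in the Boolean-algebra language such that, for any ideal $\mathcal{K}$ on $\mathbb{N}$ and any sequence $(\mathcal{B}_i)$ of $\mathcal{L}$-structures, the value of $\varphi$ in $\prod_i \mathcal{B}_i/\mathcal{K}$ is determined up to $\epsilon$ by the truth of $\Theta$ in $P(\mathbb{N})/\mathcal{K}$ with its free variables interpreted as the classes of the sets $\{i:\psi_j^{\mathcal{B}_i}\le r_{jk}\}$. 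Specializing to the constant sequence $\mathcal{B}_i=\mathcal{A}$, each such set is either $\mathbb{N}$ or $\emptyset$, and therefore its class in $P(\mathbb{N})/\mathcal{K}$ is the Boolean constant $1$ or $0$, independently of $\mathcal{K}$; substituting these constants turns $\Theta$ into a parameter-free sentence $\Theta'$ in the Boolean-algebra language depending only on $\varphi$, $\epsilon$, and $\mathcal{A}$. Since $\mathcal{I}$ and $\mathcal{J}$ are atomless, Tarski gives $P(\mathbb{N})/\mathcal{I}\equiv P(\mathbb{N})/\mathcal{J}$, so $\Theta'$ has the same truth value in both quotients, and the Feferman--Vaught approximation yields $|\varphi^{\mathcal{A}^{\mathbb{N}}/\mathcal{I}}-\varphi^{\mathcal{A}^{\mathbb{N}}/\mathcal{J}}|\le 2\epsilon$.

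The main obstacle is the precise extraction from Theorem \ref{FV}: one needs that, to within any prescribed tolerance, the dependence on the index algebra is captured by a single first-order Boolean-algebra formula whose parameters are definable set-theoretic data of the factors. This is exactly the feature that, in the constant-factor case, collapses to the $0/1$ substitution above and lets the completeness of atomless Boolean algebras do the rest of the work. If the metric Feferman--Vaught reduction instead involved genuinely real-valued or infinitary assertions about $P(\mathbb{N})/\mathcal{K}$, the argument would stall, so verifying the first-order character of the Boolean-algebra side is the one delicate point; everything else is essentially bookkeeping and a limit in $\epsilon$.
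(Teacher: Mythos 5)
Your proposal is correct and follows essentially the same route as the paper's proof of Theorem \ref{atomless}: apply Theorem \ref{FV}, observe that for a constant sequence of factors each set $X^j_i$ is $\Omega$ or $\emptyset$ so its class in $P(\Omega)/\mathcal{I}$ is the Boolean constant $0$ or $1$ independently of the ideal, and then invoke the completeness of the theory of atomless Boolean algebras together with the approximation argument of Proposition \ref{preservation}. The one point you flag as delicate --- that the Boolean-algebra side is genuinely first-order --- is exactly what Definition \ref{det} guarantees, so your argument goes through as the paper's does.
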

Therefore in particular if $\mathcal{A}$ is a separable C*-algebra then under the Continuum Hypothesis such reduced powers of $\mathcal{A}$, if they are countably saturated, are all isomorphic to  $\ell_{\infty}(\mathcal{A})/c_{0}(\mathcal{A})$.

 For an ultrafilter $\mathcal{U}$ {\L}o\'{s}'s theorem implies that a metric structure $\mathcal{A}$ is elementarily equivalent to its ultrapower $\mathcal{A}_{\mathcal{U}}$. Therefore Farah-Shelah's result shows that under the Continuum Hypothesis if $\mathcal{A}$ is a separable C*-algebra,  $\ell_{\infty}(\mathcal{A})/c_{0}(\mathcal{A})$ is isomorphic to its ultrapower associated with any nonprincipal ultrafilter on $\mathbb{N}$ (\cite[Corollary 4.1]{FaShRig}).   Theorem \ref{ee} can be used (\S\ref{Last}) to show that under the Continuum Hypothesis any reduced power of an asymptotic sequence algebra $\ell_{\infty}(\mathcal{A})/c_{0}(\mathcal{A})$ over a large class of atomless  ideals is also isomorphic to $\ell_{\infty}(\mathcal{A})/c_{0}(\mathcal{A})$ itself.

 In Section \ref{Matrices} we show there are two reduced products (of matrix algebras) which are isomorphic under the Continuum Hypothesis but there are no ``trivial'' isomorphisms (see \ref{corona-subsection}) between them. Commutative examples of such reduced products are well-known, for example under  the Continuum Hypothesis $C(\beta\omega\setminus \omega)\cong C(\beta\omega^{2}\setminus \omega^{2})$ (note that $\ell_{\infty}/c_{0}\cong C(\beta\omega\setminus \omega)$), since by a well-known result of Parovi\v{c}enko (\cite{parov}) under  the Continuum Hypothesis $\beta\omega\setminus \omega$ and $\beta\omega^{2} \setminus \omega^{2}$ are homeomorphic.  However under the \emph{proper forcing axiom} they are not isomorphic (see \cite{DowHart} and \cite[Chapter 4]{FaAn}). A naive way to obtain nontrivial isomorphisms, under the Continuum Hypothesis, between non-commutative coronas is by tensoring $C(\beta\omega\setminus \omega)$ and $C(\beta\omega^{2}\setminus \omega^{2})$ with a full matrix algebra.  However, such nontrivial isomorphisms are just amplifications of the nontrivial isomorphisms between their corresponding commutative factors (see Section \ref{Matrices} for details). It has been asked by I. Farah to give examples of genuinely non-commutative corona algebras which are non-trivially isomorphic under the Continuum Hypothesis, for non-commutative reasons. Let $M_{n}$ denote the space of all $n\times n$ matrices over the field of complex numbers. 
 In \cite{Ghasemi} it has been proved that it is relatively consistent with ZFC that all isomorphisms between reduced products of matrix algebras over analytic P-ideals (e.g., the corona of $\bigoplus M_{k(n)}$) are trivial (cf. Theorem \ref{G}).
 
\begin{theorem}\label{main}
There is an increasing sequence of natural numbers $\{k_{\infty}(i): i\in\mathbb{N}\}$ such that if $\{g(i)\}$ and $\{h(i)\}$ are two subsequences of $\{k_{\infty}(i)\}$, then  under the Continuum Hypothesis, $\mathcal{M}_{g}=\prod_{i}M_{g(i)} /\bigoplus M_{g(i)}$ is isomorphic to $\mathcal{M}_{h}= \prod_{i} M_{h(i)}/\bigoplus M_{h_(i)}$. Moreover, the following are equivalent.
\begin{enumerate}
\item $\mathcal{M}_{g}$ and $\mathcal{M}_{h}$ are isomorphic in ZFC.\\
\item $\mathcal{M}_{g}$ and $\mathcal{M}_{h}$ are trivially isomorphic, i.e., $\{g(i): i\in \mathbb{N}\}$ and $\{h(i): i\in \mathbb{N}\}$ are equal modulo finite sets.
\end{enumerate}
\end{theorem}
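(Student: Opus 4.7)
The argument splits into two essentially independent parts: the construction of $\{k_\infty(i)\}$ together with the isomorphism under CH (where the metric Feferman-Vaught theorem is used), and the in-ZFC equivalence of (1) and (2).

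The plan for the first part is to construct $\{k_\infty(i)\}$ by a diagonal argument so that the sequence $\{M_{k_\infty(i)}\}_i$ has a well-defined limit theory along every infinite subsequence. Since the language of unital C*-algebras is countable, enumerate the closed formulas $\varphi_1,\varphi_2,\dots$; at stage $i$, pass from the current candidate set to an infinite sub-subsequence along which $\varphi_j^{M_n}$ converges for every $j\leq i$ (possible because each $\varphi_j^{M_n}$ lies in a bounded interval of $\mathbb{R}$), and then choose $k_\infty(i)>k_\infty(i-1)$ inside this sub-subsequence so that the first $i$ formula-values at $M_{k_\infty(i)}$ are within $1/i$ of the corresponding limits. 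As a result, for every closed formula $\varphi$ the sequence $\varphi^{M_{k_\infty(i)}}$ converges, and its limit is the same along every infinite subsequence. Applying the metric Feferman-Vaught theorem (Theorem \ref{FV}) to $\mathcal{M}_g=\prod_i M_{g(i)}/\bigoplus_i M_{g(i)}$ over the Fréchet ideal, the value of each closed formula in $\mathcal{M}_g$ is computed from the Fréchet-limits of finitely many auxiliary local values $\psi^{M_{g(i)}}$ combined through the Boolean algebra $P(\mathbb{N})/\mathrm{Fin}$. For two subsequences $g,h$ of $k_\infty$ these Fréchet-limits coincide and $P(\mathbb{N})/\mathrm{Fin}$ is the same in both cases, so $\mathcal{M}_g\equiv\mathcal{M}_h$. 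Both sides are countably saturated by Theorem \ref{countable saturation}; under CH they have density character $\aleph_1$, and the back-and-forth construction for $\aleph_1$-saturated metric structures of density character $\aleph_1$ delivers the desired $*$-isomorphism.

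For the equivalence, (2)$\Rightarrow$(1) is immediate: a bijection between cofinite index sets matching $g$ to $h$ induces a coordinatewise $*$-isomorphism of the direct products that descends to a trivial isomorphism of the reduced products. The converse (1)$\Rightarrow$(2) I would derive from Ghasemi's consistency result (Theorem \ref{G}): there is a model of ZFC in which every isomorphism between such matrix reduced products is trivial. If $\mathcal{M}_g\cong\mathcal{M}_h$ is provable in ZFC, then in that model the witnessing isomorphism must already be trivial, which produces a tail-bijection of the index sets and hence equality of the ranges of $g$ and $h$ modulo finite sets.

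The main technical obstacle will be the precise interplay between the diagonal construction and the metric Feferman-Vaught theorem: one has to verify that the particular ``local data'' extracted by the FV algorithm depends only on the invariants stabilized by the diagonalization, and that coincidence of these limits along $g$ and $h$ translates into identical Boolean-algebraic output in $P(\mathbb{N})/\mathrm{Fin}$. The remaining steps---countable saturation, the metric back-and-forth under CH, and the consistency transfer---should be essentially routine given the cited background.
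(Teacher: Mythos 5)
Your proposal is correct and follows essentially the same route as the paper: a diagonal extraction making the value of every sentence from a countable uniformly dense family converge along $\{k_{\infty}(i)\}$, the metric Feferman--Vaught theorem to conclude $\mathcal{M}_{g}\equiv\mathcal{M}_{h}$ for any two subsequences, countable saturation plus CH and back-and-forth for the isomorphism, and Theorem \ref{G} for the implication (1)$\Rightarrow$(2). The only adjustment needed is that one cannot literally enumerate all closed formulas of a continuous language (there are uncountably many even when the language is countable); as in the paper, one diagonalizes over the countable uniformly dense set of $\mathcal{F}_{0}$-restricted sentences given by Proposition \ref{F-restricted}, after which your argument goes through unchanged.
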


  Thus if $\{g(i): i\in \mathbb{N}\}$ and $\{h(i): i\in \mathbb{N}\}$ are almost disjoint, this gives an example of two genuinely non-commutative reduced products for which the question ``whether or not they are isomorphic?", is independent from ZFC. We will also show (Theorem \ref{many}) that there is an abundance of different theories of reduced products of sequences of matrix algebras, by exhibiting $2^{\aleph_{0}}$ pairwise non-elementarily equivalent such reduced products.\\

\textbf{ACKNOWLEDGMENTS}. I would like to thank my Ph.D supervisor Ilijas Farah for eye-opening supervision and numerous comments and remarks. Many thanks to Vinicius Cif\'{u} Lopes, Seyed Mohammad Bagheri, Christopher Eagle and Martino Lupini for the comments, and to Bradd Hart for introducing me to some different aspects of the topic. I would also like to thank the referee for many suggestions and remarks.

\section{Some preliminaries}

\subsection{Corona algebras and trivial isomorphisms}\label{corona-subsection} The reader may refer to \cite{Black} for a detailed book on C*-algebras.  A  C*-algebra $\mathcal{A}$ is a Banach *-algebra over the field of complex numbers which satisfies the C*-identity
\begin{equation}
\nonumber \|xx^{*}\|=\|x\|^{2}
\end{equation}
for all $x\in\mathcal{A}$.
The Gelfand-Naimark theorem states that every commutative C*-algebra is isomorphic to some $C_0(X)$, the algebra of all continuous functions  on a locally compact Hausdorff space $X$ which vanish at infinity, with the pointwise multiplication. In general a C*-algebra may not have a unit element.
For a non-unital C*-algebra $\mathcal{A}$ there are various ways in which $\mathcal{A}$ can be embedded as an (essential) ideal in a unital C*-algebra. If $\mathcal A=C_{0}(X)$ is commutative this corresponds to the ways in which the locally compact Hausdorff space $X$ can be embedded as an open (dense) set in a compact Hausdorff space $Y$. The analogue of the \v{C}ech-Stone compactification for (non-commutative) C*-algebras is called the \emph{multiplier algebra} of $\mathcal{A}$.
The multiplier algebra $\mathcal{M}(\mathcal{A})$ of $\mathcal{A}$ is the unital C*-algebra containing $\mathcal{A}$ as an essential  ideal, which is universal in the sense that whenever $\mathcal{A}$ is an ideal of a unital C*-algebra $\mathcal{D}$, the identity map on $\mathcal{A}$ extends uniquely to a *-homomorphism from $\mathcal{D}$ into $\mathcal{M}(\mathcal{A})$ (cf. \cite[ II.7.3]{Black}).
The quotient C*-algebra $\mathcal{C}(\mathcal{A})=\mathcal{M}(\mathcal{A})/\mathcal{A}$ is called the \emph{corona} of $\mathcal{A}$.
A few well-known examples are corona algebras are listed below. 

(i) If $\mathcal{A}$ is unital, then $\mathcal{M}(\mathcal{A})=\mathcal{A}$. Therefore the corona of $\mathcal{A}$ is trivial.

(ii) If $\mathcal{A}= \mathcal{K}(H)$, the algebra of all compact operators on a Hilbert space $H$, then $\mathcal{M}(\mathcal{A})=\mathcal{B}(H)$, the algebra of all bounded linear operators on $H$. The corona of $\mathcal{A}$ is the Calkin algebra $\mathcal{C}(H)$.

(iii) If $X$ is a locally compact Hausdorff space, then  $\mathcal{M}(C_{0}(X))\cong C(\beta X)$, which is isomorphic to the C*-algebra $C_{b}(X)$ of bounded continuous complex-valued functions on $X$. The corona of $C_{0}(X)$ is isomorphic to the C*-algebra $C_{b}(X)/C_{0}(X)\cong C(X^{*})$, where $X^{*}$ is the \v{C}ech-Stone remainder $\beta X\setminus X$ of $X$.

(iv) Suppose $\mathcal{A}_{n}$ is a sequence of unital C*-algebras and let
\begin{eqnarray}
\nonumber &&\prod_{n}^{\infty} \mathcal{A}_{n}=\{(x_{n}): ~~x_{n}\in \mathcal{A}_{n}~~\text{and}~~ \sup_{n} \|x_{n}\|< \infty\}\\
\nonumber &&\bigoplus_{n}^{\infty} \mathcal{A}_{n}=\{(x_{n})\in \prod_{n} \mathcal{A}_{n}:  \|x_{n}\|\rightarrow 0\}.
\end{eqnarray}
If $\mathcal{A}= \bigoplus_{n} \mathcal{A}_{n}$ then $\mathcal{M}(\mathcal{A})=\prod_{n} \mathcal{A}_{n}$ and the corona of $\mathcal{A}$ is $\prod_{n} \mathcal{A}_{n}/\bigoplus_{n} \mathcal{A}_{n}$.

(v) If $\mathcal{A}$ is a C*-algebra and $X$ is a locally compact Hausdorff space, let
 \begin{eqnarray}
\nonumber &&C_{b}(X, \mathcal{A})=\{f: X\rightarrow \mathcal{A}: ~~ f \text{ is a norm continuous and bounded function} \}\\
\nonumber &&C_{0}(X, \mathcal{A})= \{f: X\rightarrow \mathcal{A}: ~~ f \text{ is continuous and vanishes at } \infty\},
\end{eqnarray}
 then the $\mathcal{M}(C_{0}(X,\mathcal{A}))\cong C_{b}(X, \mathcal{A})$.  Therefore $\mathcal{C}(C_{0}(X,\mathcal{A}))\cong C_{b}(X,\mathcal{A})/C_{0}(X,\mathcal{A})\cong \mathcal{C}(X^{*}, \mathcal{C}(\mathcal{A}))$. 
 
Our notion of trivial isomorphisms between corona algebras  follows the one in  \cite{Trivial}, where it  is considered more generally for isomorphisms between quotients of algebraic structures.  
\begin{definition}\label{trivial-definition}
Assume $\mathcal{A}$ and $\mathcal{B}$ are non-unital C*-algebras. A *-homomorphism $\Phi:\mathcal{C}(\mathcal{A})\rightarrow \mathcal{C}(\mathcal{B})$ is trivial if there is a *-homomorphism  $\Phi_{*}: \mathcal{M}(\mathcal{A})\rightarrow \mathcal{M}(\mathcal{B})$ such that
\begin{center}
\begin{tikzpicture}
\matrix(m)[matrix of math nodes,
row sep=2.6em, column sep=2.8em,
text height=1.5ex, text depth=0.25ex]
{\mathcal{M}(\mathcal{A})&\mathcal{M}(\mathcal{B})\\
\mathcal{C}(\mathcal{A})&\mathcal{C}(\mathcal{B})\\};
\path[->,font=\scriptsize,>=angle 90]
(m-1-1) edge node[auto] {$\Phi_{*}$} (m-1-2)
edge node[auto] {$\pi_{\mathcal A}$} (m-2-1)
(m-1-2) edge node[auto] {$\pi_{\mathcal B}$} (m-2-2)
(m-2-1) edge node[auto] {$\Phi$} (m-2-2);
\end{tikzpicture}
\end{center}
commutes, where $\pi_{\mathcal{A}}$ and $\pi_{\mathcal{B}}$ are the canonical quotient maps. 
\end{definition}
In the case of automorphisms, this notion of triviality is clearly weaker than the automorphisms being inner, i.e., being implemented by a unitary element. The question of whether the automorphisms, or generally the isomorphisms, between corona algebras are trivial turns out to be very sensitive to the additional set-theoretic assumptions. 
The most notorious results concerning the triviality of automorphisms of corona of non-commutative C*-algebras concern the Calkin algebra. It has been shown (\cite{PhilWeav} and \cite{FaCalkin}) that the statement that all the automorphisms of the Calkin algebra are inner, is independent from ZFC.  Refer to \cite[\S1]{Ghasemi} for a short overview of the some of the important results about triviality of the isomorphisms between corona algebras.

\subsection{Logic for metric structures} A main reference for logic for metric structures is \cite{Ben}, see also \cite{Bradd}.  A metric structure, in the sense of \cite{Ben}, is a many-sorted structure in which each sort is a
complete bounded metric space. A slightly modified version of the this logic was introduced in \cite{FHS} which does not require the structures to be bounded, and it is more suited while working with operator algebras. A key  feature of this logic is that each structure $\mathcal A$ is equipped with \emph{domains
of quantification}, bounded subsets of $\mathcal A$ on which all functions and predicates
are uniformly continuous (with a fixed modulus of uniform continuity) which the quantification is allowed.  For simplicity in notations
 we follow \cite{Ben} in this paper and assume that the metric structures are bounded, but the arguments can be adapted, in the obvious way, to unbounded metric structures as in \cite{FHS}. 
For a language $\mathcal{L}$ for metric structures, as usual a metric $\mathcal{L}$-structure is a complete bounded metric space (for each sort, in the case of many-sorted structures) with appropriate interpretations for predicates, functions and constant symbols in $\mathcal{L}$. Each predicate and function symbol is equipped with a \emph{modulus of uniform continuity}. In particular for every $\mathcal{L}$-formula $\varphi$, the set of all evaluations of $\varphi$ in any $\mathcal{L}$-structure with the diameter less than a fixed constant, is a bounded subset of the real numbers.
 Throughout this and the following section we assume formulas are $[0,1]$-valued. This is not always an assumption in continuous model theory  (e.g., formulas in the model theory for operator algebras). However, since the ranges of (evaluations of) formulas are always bounded in all interpretations  allowing $[0.\infty)$-valued or even negative evaluations results in equivalent logics; see \cite[4.1]{FaICM} for more details on this.
We assume that the reader is familiar with the basic definitions of model theory for metric structures, e.g., \emph{terms, atomic formulas, formulas, types}, etc. 

  For a topological space $X$,  let $\chi(X)$  denote the density character of $X$, i.e.,  the smallest cardinality of a dense subset of $X$. 
 Suppose $\kappa$ is an infinite cardinal. A model $\mathcal{A}$ is $\kappa$-\emph{saturated} if
every consistent type $\textbf{t}$ over $X\subseteq \mathcal{A}$ with  $|X| <\kappa$, is realized
in $\mathcal{A}$. If $\mathcal{A}$ is $\chi(X)$-saturated then $\mathcal{A}$ is called saturated.
We say $\mathcal A$ is countably saturated if it is $\aleph_1$-saturated.

We let $\mathbb{F}_{n}$ be the set of all formulas whose free variables are included
in $\{x_{1}, \dots,  x_{n}\}$.
For a metric structure $\mathcal{A}$ we usually abbreviate a tuple $(a_{1},\dots, a_{n})$ of elements of $\mathcal{A}$ by $\bar{a}$, when there is no confusion about the length of the tuple, and $\varphi(\bar{a})^{\mathcal{A}}$ is the \emph{interpretation} of $\varphi$ in $\mathcal{A}$ at $\bar{a}$.

Define the \emph{theory} of $\mathcal{A}$ to be
\begin{equation}
\nonumber Th(\mathcal{A})=\{\varphi\in \mathbb{F}_{0}:  ~ \varphi^{\mathcal{A}}=0 \}.
\end{equation}

Two metric structures $\mathcal{A}$ and $\mathcal{B}$ are \emph{elementarily equivalent}, $\mathcal{A}\equiv \mathcal{B}$, if $Th(\mathcal{A})= Th(\mathcal{B})$.
The \emph{universal theory} $Th_{\forall}(\mathcal{A})$ of $\mathcal{A}$ is the subset of $Th(\mathcal{A})$ consisting of sentences of the form $\sup_{\bar{x}}\varphi(\bar{x})$ where $\varphi$ is a quantifier-free formula and $\bar{x}$ is a finite tuple.

 Lets recall some requirements regarding the connectives from \cite[Chapter 6]{Ben}, which will be used throughout this paper. 
 Connectives are continuous functions from $[0,1]^{n}$ to $[0,1]$ for some $n\geq 1$.
 \begin{definition} A closed system of connectives is a family $\mathcal{F}=(F_{n}: n\geq 1)$ where each $F_{n}$ is a set of connectives $f:[0,1]^{n} \rightarrow [0,1]$ satisfying the following conditions.
\begin{enumerate}
\item[(i)] For each $n$, $F_{n}$ contains the projection onto the $j^{th}$ coordinate for each $j=1, \dots, n$.
\item[(ii)] For each $n$ and $m$, if $u\in F_{n}$, and $v_{1}, \dots, v_{n}\in F_{m}$, then the function $w: [0,1]^{m} \rightarrow [0,1]$ defined by $w(t)=u(v_{1}(t), \dots, v_{n}(t))$ belongs to $F_{m}$.
\end{enumerate}
\end{definition}

\begin{definition}
Given a closed system of connectives $\mathcal{F}$, the collection of $\mathcal{F}$-\emph{restricted} formulas is defined by induction.
\begin{enumerate}
\item Atomic formulas are $\mathcal{F}$-restricted.
\item If $u\in F_{n}$ and $\varphi_{1}, \dots, \varphi_{n}$ are $\mathcal{F}$-restricted formulas, then $u(\varphi_{1}, \dots, \varphi_{n})$ is also an  $\mathcal{F}$-restricted formula.
\item If $\varphi$ is an  $\mathcal{F}$-restricted formula, so are $\sup_{x} \varphi$ and $\inf_{x} \varphi$.
\end{enumerate}
\end{definition}

 Define a binary function $\dotminus: [0,1]^{2}\rightarrow [0,1]$ by
 \begin{equation}
  \nonumber        x \dotminus y =  \begin{cases}
                           x-y           & x\geq y\\
                           0             & \text{otherwise}
                           \end{cases}
 \end{equation}
 and let $\mathcal{F}_{0}=(F_{n}: n\geq 1)$ be the closed system of connectives generated from $\{0,1, x/2, \dotminus\}$ by closing it under (i) and (ii) (where $0$ and $1$ are constant functions with one variable).
\begin{proposition}\label{F-restricted}\cite[Proposition 6.6]{Ben}
  The set of all $\mathcal{F}_{0}$-restricted $\mathcal{L}$-formulas are uniformly dense in the set of all $\mathcal{L}$-formulas; that is, for any $\epsilon>0$ and any $\mathcal{L}$-formula $\varphi(x_{1}, \dots, x_{n})$, there is an  $\mathcal{F}_{0}$-restricted $\mathcal{L}$-formula $\psi(x_{1}, \dots, x_{n})$ such that for all $\mathcal{L}$-structures $\mathcal{A}$ we have
 \begin{equation}
 \nonumber |\varphi(a_{1},\dots, a_{n})- \psi(a_{1},\dots, a_{n})|< \epsilon
 \end{equation}
 for all $a_{1},\dots, a_{n}\in \mathcal{A}$.  In particular if $\mathcal{L}$ is countable, there is a countable set of $\mathcal{L}$-formulas which is uniformly dense in the set of all $\mathcal{L}$-formulas.
\end{proposition}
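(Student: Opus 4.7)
The proof proceeds by structural induction on the complexity of the $\mathcal{L}$-formula $\varphi$, reducing to the analytic claim that for every $n\geq 1$, the family $F_n$ is uniformly dense in $C([0,1]^n,[0,1])$. Granting this, the atomic case is immediate; in the inductive step $\varphi=u(\varphi_1,\dots,\varphi_n)$, one first uses the inductive hypothesis to pick $\mathcal{F}_0$-restricted $\psi_i$ with $|\varphi_i^{\mathcal{A}}-\psi_i^{\mathcal{A}}|<\delta$ uniformly (for a suitably small $\delta$), then invokes the density claim to pick $v\in F_n$ with $\|u-v\|_\infty<\epsilon/2$, and finally combines these via the uniform continuity of $v$ on the compact cube $[0,1]^n$ to conclude that $v(\psi_1,\dots,\psi_n)$ is the required $\epsilon$-approximation of $\varphi$ in every $\mathcal{L}$-structure.

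The heart of the argument is the density of $F_n$ in $C([0,1]^n,[0,1])$, which I would establish via the Kakutani--Stone lattice form of the Stone--Weierstrass theorem on the compact space $[0,1]^n$. First, $F_n$ is a sublattice: writing $\neg x:=1\dotminus x$ (built from $\{1,\dotminus\}$) and $x\oplus y:=\neg(\neg x\dotminus y)=\min(x+y,1)$, one obtains the explicit identities $\min(x,y)=x\dotminus(x\dotminus y)$ and $\max(x,y)=(x\dotminus y)\oplus y$, both in $F_2$. Second, one verifies the point-separation hypothesis: iterating the generator $x/2$ on the constants $0,1$ and on the projection connectives, and combining via $\oplus$ and $\dotminus$, produces arbitrarily accurate $[0,1]$-clipped dyadic-affine combinations of the projections. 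Since the dyadic rationals are dense in $[0,1]$, for any distinct $p,q\in[0,1]^n$ and any $\alpha,\beta\in[0,1]$ one can exhibit an element of $F_n$ whose values at $p$ and $q$ lie within $\epsilon$ of $\alpha$ and $\beta$ respectively -- just take a dyadic-affine function of the single coordinate in which $p$ and $q$ differ, suitably truncated.

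The quantifier step is essentially free: the operations $\psi\mapsto\sup_{x}\psi$ and $\psi\mapsto\inf_{x}\psi$ are $1$-Lipschitz in the uniform norm, so any $\epsilon$-approximation of $\psi$ by an $\mathcal{F}_0$-restricted formula passes, after prepending a quantifier, to an $\epsilon$-approximation of $\sup_{x}\psi$ or $\inf_{x}\psi$. The final clause is immediate by cardinality: over a countable $\mathcal{L}$ there are only countably many atomic formulas and only countably many expression trees built from the finite generating set $\{0,1,x/2,\dotminus\}$, so the set of $\mathcal{F}_0$-restricted $\mathcal{L}$-formulas is already countable. The principal obstacle in this plan is the density statement for $F_n$: the lattice identities are routine once truncated addition is available, but the point-separation verification requires some bookkeeping to confirm that truncation effects in $\dotminus$ and $\oplus$ do not obstruct the approximation of arbitrary real values at the two chosen points.
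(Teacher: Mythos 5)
The paper offers no proof of this statement; it is quoted verbatim from \cite[Proposition 6.6]{Ben}, and your argument is essentially the standard one given there: reduce by induction on formula complexity (with the $1$-Lipschitz quantifier step and the uniform-continuity splicing for connectives) to the uniform density of $F_{n}$ in $C([0,1]^{n},[0,1])$, and establish that via the lattice form of Stone--Weierstrass using $\min(x,y)=x\dotminus(x\dotminus y)$, truncated addition, and clipped dyadic-affine functions for point separation. The proposal is correct, including the bookkeeping you flag: choosing the slope $c$ first and then a dyadic cutoff between the two separated coordinate values makes the truncations harmless.
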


\subsection{Reduced products of metric structures}
Lets recall some definitions and basic theorems regarding reduced products of metric structures from \cite{FaShRig} and \cite{Lopes}.
Fix a language $\mathcal{L}$ in logic of metric structures. Throughout this paper $\mathcal{L}$ can be many-sorted, but in order to avoid distracting notations we shall assume it is one-sorted.
 Assume $\{(\mathcal{A}_{\gamma}, d_{\gamma}), ~\gamma\in \Omega\}$ is a family of metric $\mathcal{L}$-structures indexed by a set $\Omega$, all having diameter $\leq K$ for some constant $K$. Consider the direct product
\begin{eqnarray}
\nonumber \prod_{\Omega}\mathcal{A}_{\gamma}= \{\langle a(\gamma)\rangle: ~ ~a(\gamma)\in \mathcal{A}_{\gamma} \text{ for all } \gamma\in \Omega\}.
\end{eqnarray}
 Let   $\mathcal{I}$ be an ideal on $\Omega$. Define a map $d_{\mathcal{I}}$ on $\prod_{\Omega}\mathcal{A}_{\gamma}$ by
 \begin{equation}
 \nonumber d_{\mathcal{I}}(x,y)= \limsup_{\gamma\rightarrow \mathcal{I}} d_{\gamma}(x(\gamma), y(\gamma)) =\inf_{S\in \mathcal{I}} \sup_{\gamma \notin S} d_{\gamma}(x(\gamma), y(\gamma))
 \end{equation}
  where $x=\langle x(\gamma): \gamma\in \Omega\rangle$ and $y=\langle y(\gamma): \gamma\in \Omega\rangle$.  
Note that  the limit notation above corresponds to the notation $\lim_{\gamma\rightarrow \mathcal{I}} r_\gamma= L$ meaning  that for every $\epsilon>0$ the set 
$\{\gamma\in \Omega: |r_\gamma - L|\geq \epsilon \}$ belongs to the ideal $\mathcal I$.

The map $d_{\mathcal{I}}$ defines a pseudometric metric on  $\prod_{\Omega}\mathcal{A}_{\gamma}$.
  For $x,y\in \prod_{\Omega} \mathcal{A}_{\gamma}$ define $x\sim_{\mathcal{I}} y$ to mean $d_{\mathcal{I}}(x,y)=0$. Then $\sim_{\mathcal{I}}$ is an equivalence relation and the quotient
  \begin{equation}
  \nonumber \prod_{\mathcal{I}} \mathcal{A}_{\gamma}= (\prod_{\Omega} \mathcal{A}_{\gamma})/ \sim_{\mathcal{I}}
  \end{equation}
   with the induced metric $d_{\mathcal{I}}$  is a complete bounded metric space.
 We will use $\pi_{_{\mathcal{I}}}$ to denote the natural quotient map from $\prod_{\Omega} \mathcal{A}_{\gamma}$ onto $\prod_{\mathcal{I}} \mathcal{A}_{\gamma}$. For a tuple $\bar{a}=(a_{1},\dots,a_{k})$ of elements of $\prod_{\Omega}\mathcal{A}_{\gamma}$ we write $\pi_{_{\mathcal{I}}}(\bar{a})$ for $(\pi_{_{\mathcal{I}}}(a_{1}),\dots , \pi_{_{\mathcal{I}}}(a_{k}))$ and by $\bar{a}(\gamma)$ we denote the corresponding tuple $(a_{1}(\gamma), \dots, a_{k}(\gamma))$ of elements of $\mathcal{A}_{\gamma}$.

  Let $R$ be a predicate symbol in $\mathcal{L}$ and $\bar{a}$ be a tuple  of elements of $\prod_{\Omega}\mathcal{A}_{\gamma}$ of appropriate size. Define
  \begin{equation}
  \nonumber R(\pi_{_{\mathcal{I}}}(\bar{a}))= \limsup_{\mathcal{I}} R(\bar{a}(\gamma)).
  \end{equation}
If $f$ is a function symbol in $\mathcal{L}$ for an appropriate $\bar{a}$ define
\begin{equation}
\nonumber f(\pi_{_{\mathcal{I}}}(\bar{a}))= \pi_{_{\mathcal{I}}}(\langle f(\bar{a}(\gamma))\rangle),
\end{equation}
and if $c\in \mathcal{L}$ is a constant symbol let
\begin{equation}
\nonumber c^{\prod_{\mathcal{I}} \mathcal{A}_{\gamma}}= \pi_{_{\mathcal{I}}}(\langle c^{\mathcal{A}_{\gamma}}\rangle).
\end{equation}

  The quotient $\prod_{\mathcal{I}} \mathcal{A}_{\gamma}$ is called the \emph{reduced product} of the family $\{(\mathcal{A}_{\gamma}, d_{\gamma}):$ $ \gamma\in \Omega\}$ over the ideal $\mathcal{I}$.  Note that if $\mathcal{I}$ is a maximal (prime) ideal, then $\prod_{\mathcal{I}} \mathcal{A}_{\gamma}$ is the ultraproduct of the family $\{\mathcal{A}_{\gamma}, ~~\gamma\in \Omega\}$ over the ultrafilter $\mathcal{U}$ consisting of the complements of the elements of $\mathcal{I}$, usually denoted by $\prod_{\mathcal{U}} \mathcal{A}_{\gamma}$ or $(\prod_{\Omega} \mathcal{A}_{\gamma})_{\mathcal{U}}$ or $\prod_{\Omega} \mathcal{A}_{\gamma}/\mathcal{U}$.
 Also, in the case when $\mathcal{L}$ includes a distinguished constant symbol for $0$ (e.g., the language of C*-algebras) the reduced product of $\mathcal{L}$-structures  $\{\mathcal{A}_{\gamma}, ~~\gamma\in \Omega\}$ over $\mathcal{I}$ is the quotient of $\prod_{\Omega} \mathcal{A}_{\gamma}$ over its closed ideal $\bigoplus_{\mathcal{I}} \mathcal{A}_{\gamma}$ defined by
\begin{equation}
\nonumber \bigoplus_{\mathcal{I}}\mathcal{A}_{\gamma}= \{ a \in \prod_{\Omega}\mathcal{A}_{\gamma}: ~ ~  d_{\mathcal{I}}(a, 0^{\prod_{\Omega}\mathcal{A}_{\gamma}})=0\},
\end{equation}
and usually denoted by $\prod_{\Omega} \mathcal{A}_{\gamma}/\bigoplus_{\mathcal{I}} \mathcal{A}_{\gamma}$ (see \cite{FaShRig}).

\begin{proposition}
The structure $\langle \prod_{\mathcal{I}} \mathcal{A}_{\gamma}, d_{\mathcal{I}}\rangle$ in the language $\mathcal L$ with the interpretations of constants, functions and predicates as above, is a metric $\mathcal{L}$-structure.
\end{proposition}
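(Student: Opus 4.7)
The plan is to verify each requirement of being a metric $\mathcal{L}$-structure. Since the paragraph preceding the proposition already establishes that $(\prod_{\mathcal{I}}\mathcal{A}_{\gamma}, d_{\mathcal{I}})$ is a complete bounded metric space of diameter $\leq K$, what remains is to show that the interpretations of predicate symbols, function symbols, and constant symbols are (i) well-defined on equivalence classes modulo $\sim_{\mathcal{I}}$, (ii) take values in the correct bounded range, and (iii) are uniformly continuous with the same moduli as in the factors $\mathcal{A}_{\gamma}$. Constants are immediate from the definition.

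The only real input needed is the following elementary fact about $\limsup_{\mathcal{I}}$: if $\Delta\colon [0,\infty)\to[0,\infty)$ is a continuous nondecreasing function with $\Delta(0)=0$ (e.g., a modulus of uniform continuity), and $\langle r_{\gamma}\rangle_{\gamma\in\Omega}$ is a bounded family of nonnegative reals with $\limsup_{\gamma\to\mathcal{I}}r_{\gamma}=L$, then
\[
\limsup_{\gamma\to\mathcal{I}}\Delta(r_{\gamma})\leq \Delta(L).
\]
Indeed, for every $\epsilon>0$ the set $\{\gamma : r_{\gamma}>L+\epsilon\}$ lies in $\mathcal{I}$, so off it $\Delta(r_{\gamma})\leq \Delta(L+\epsilon)$ by monotonicity; sending $\epsilon\to 0$ and using continuity of $\Delta$ at $L$ gives the inequality. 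One also uses the standard subadditivity $\limsup_{\gamma\to\mathcal{I}}(s_\gamma+t_\gamma)\leq \limsup_{\gamma\to\mathcal{I}}s_\gamma+\limsup_{\gamma\to\mathcal{I}}t_\gamma$, from which the triangle inequality for $d_{\mathcal{I}}$ itself already follows.

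For a predicate symbol $R$ with modulus $\Delta_R$, the pointwise inequality $|R(\bar{a}(\gamma))-R(\bar{b}(\gamma))|\leq \Delta_R(\max_i d_{\gamma}(a_i(\gamma),b_i(\gamma)))$, together with the fact above, gives both (a) well-definedness: if $\bar a\sim_{\mathcal{I}}\bar b$ then $\limsup_{\gamma\to\mathcal{I}}|R(\bar a(\gamma))-R(\bar b(\gamma))|\leq \Delta_R(0)=0$, so $R(\pi_{_{\mathcal{I}}}(\bar a))=R(\pi_{_{\mathcal{I}}}(\bar b))$; and (b) preservation of the modulus: $|R(\pi_{_{\mathcal{I}}}(\bar a))-R(\pi_{_{\mathcal{I}}}(\bar b))|\leq \Delta_R(\max_i d_{\mathcal{I}}(\pi_{_{\mathcal{I}}}(a_i),\pi_{_{\mathcal{I}}}(b_i)))$. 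The range bound on $R$ is inherited verbatim from the factors. The analogous computation for a function symbol $f$ yields $d_{\mathcal{I}}(f(\pi_{_{\mathcal{I}}}(\bar a)),f(\pi_{_{\mathcal{I}}}(\bar b)))\leq \Delta_f(\max_i d_{\mathcal{I}}(\pi_{_{\mathcal{I}}}(a_i),\pi_{_{\mathcal{I}}}(b_i)))$, giving both well-definedness of $f^{\prod_{\mathcal{I}}\mathcal{A}_{\gamma}}$ on equivalence classes and preservation of the modulus $\Delta_f$.

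The only subtle point, such as it is, is the interaction between $\limsup_{\gamma\to\mathcal{I}}$ and the moduli of uniform continuity, captured by the lemma above; once that is in place, the rest is mechanical bookkeeping and the proposition follows.
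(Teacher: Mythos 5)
Your proof is correct and takes essentially the same route as the paper: the paper's proof likewise reduces everything to checking that the moduli of uniform continuity survive the passage to $d_{\mathcal{I}}=\limsup_{\mathcal{I}}d_{\gamma}$, though it phrases the modulus in the $\epsilon$--$\delta$ form of \cite{Ben} (namely $d_{\gamma}(\bar{x},\bar{y})<\Delta(\epsilon)\Rightarrow d_{\gamma}(f(\bar{x}),f(\bar{y}))\leq\epsilon$) and carries out the verification only for a function symbol, leaving predicates and well-definedness on $\sim_{\mathcal{I}}$-classes implicit. Your lemma on $\limsup_{\mathcal{I}}\circ\,\Delta$ is the same computation in the oscillation-bound formulation of the modulus; the only caveat is that it uses continuity (from the right) of $\Delta$ at $L$, which is harmless since moduli of uniform continuity can always be chosen continuous.
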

\begin{proof}
We only have to check that each function and predicate symbol has the same modulus of uniform continuity. we shall prove this only for a
function symbol $f$ of arity $k$. Let $\Delta:[0,1]\rightarrow [0,1]$ be the modulus of uniform continuity of $f$, i.e.,  for  $\epsilon>0$ and $\bar{x}=(x_{1},\dots, x_{k}), ~~\bar{y}=(y_{1}, \dots, y_{k})$ tuples in each $\mathcal{A}_{\gamma}$ we have
\begin{equation}
\nonumber d_{\gamma}(\bar{x},\bar{y})< \Delta(\epsilon)~~ \rightarrow~~ d_{\gamma}(f(\bar{x}),f(\bar{y}))\leq\epsilon,
\end{equation}
where $d_{\gamma}(\bar{x},\bar{y})< \Delta(\epsilon)$ means $d_{\gamma}(x_{i}, y_{i})< \Delta(\epsilon)$ for ever $i=\{1,\dots, k\}$.

Suppose $\bar{a}$ and $\bar{b}$ in $(\prod_{\Omega} \mathcal{A}_{\gamma})^{k}$ are such that $d_{\mathcal{I}}(\pi_{I}(\bar{a}), \pi_{_{\mathcal{I}}}(\bar{b}))< \Delta(\epsilon)$. Then by the definition of $d_{\mathcal{I}}$ there is an $\mathcal{I}$-positive set $S\subseteq \Omega$ such that
 for every $\gamma\in S$ we have $d_{\gamma}(\bar{a}(\gamma), \bar{b}(\gamma))< \Delta(\epsilon)$, and therefore $d_{\gamma}(f(\bar{a}(\gamma)),f(\bar{b}(\gamma)))\leq\epsilon$. This implies that $d_{\mathcal{I}}(\pi_{_{\mathcal{I}}}(f(\bar{a})), \pi_{_{\mathcal{I}}}(f(\bar{y})))\leq\epsilon$.
\end{proof}
 \begin{lemma}\label{atomic}
  Assume $\mathcal{I}$ is an ideal on $\Omega$.
  If $ \varphi(\bar{y})$ is an atomic $\mathcal{L}$-formula and $\bar{a}$ is a tuple of elements of $\prod_{\Omega} \mathcal{A}_{\gamma}$, then
 \begin{equation}
 \nonumber \varphi(\pi_{_{\mathcal{I}}}(\bar{a}))^{\prod_{\mathcal{I}} \mathcal{A}_{\gamma}}= \limsup_{\mathcal{I}} \varphi(\bar{a}(\gamma))^{\mathcal{A}_{\gamma}}.
   \end{equation}
  \end{lemma}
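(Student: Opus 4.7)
The proof is an unpacking of the definitions, organized around a preliminary induction on terms. The plan is as follows.

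First, I would prove by induction on the complexity of an $\mathcal{L}$-term $t(\bar y)$ that for every tuple $\bar a$ in $\prod_{\Omega}\mathcal{A}_\gamma$,
\begin{equation*}
 t^{\prod_{\mathcal{I}}\mathcal{A}_\gamma}\!\bigl(\pi_{_{\mathcal{I}}}(\bar a)\bigr)
 = \pi_{_{\mathcal{I}}}\bigl(\langle t^{\mathcal{A}_\gamma}(\bar a(\gamma))\rangle_{\gamma\in\Omega}\bigr).
\end{equation*}
The base cases (variables and constants) are immediate from the definition of $c^{\prod_{\mathcal{I}}\mathcal{A}_\gamma}$. For the inductive step $t=f(s_1,\dots,s_k)$, apply the interpretation of function symbols given before the statement, namely $f(\pi_{_{\mathcal{I}}}(\bar b))=\pi_{_{\mathcal{I}}}(\langle f(\bar b(\gamma))\rangle)$, to the tuple $\bar b=(\langle s_i^{\mathcal{A}_\gamma}(\bar a(\gamma))\rangle)_{i\leq k}$ supplied by the induction hypothesis.

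Next, I would split the atomic formula $\varphi(\bar y)$ into the two possible shapes. If $\varphi=P(t_1,\dots,t_n)$ for a predicate symbol $P\in\mathcal{L}$, then by the interpretation of $P$ in the reduced product,
\begin{equation*}
 \varphi\bigl(\pi_{_{\mathcal{I}}}(\bar a)\bigr)^{\prod_{\mathcal{I}}\mathcal{A}_\gamma}
 = P\bigl(t_1^{\prod_{\mathcal{I}}\mathcal{A}_\gamma}(\pi_{_{\mathcal{I}}}(\bar a)),\dots,t_n^{\prod_{\mathcal{I}}\mathcal{A}_\gamma}(\pi_{_{\mathcal{I}}}(\bar a))\bigr)
 = \limsup_{\mathcal{I}} P\bigl(t_1^{\mathcal{A}_\gamma}(\bar a(\gamma)),\dots,t_n^{\mathcal{A}_\gamma}(\bar a(\gamma))\bigr),
\end{equation*}
where the second equality uses the term lemma inside the predicate $P$ together with its definition in $\prod_{\mathcal{I}}\mathcal{A}_\gamma$. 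The right hand side is exactly $\limsup_{\mathcal{I}}\varphi(\bar a(\gamma))^{\mathcal{A}_\gamma}$. If instead $\varphi=d(t_1,t_2)$, then using the term lemma one first rewrites $\varphi(\pi_{_{\mathcal{I}}}(\bar a))^{\prod_{\mathcal{I}}\mathcal{A}_\gamma}$ as $d_{\mathcal{I}}\bigl(\pi_{_{\mathcal{I}}}(\langle t_1^{\mathcal{A}_\gamma}(\bar a(\gamma))\rangle),\pi_{_{\mathcal{I}}}(\langle t_2^{\mathcal{A}_\gamma}(\bar a(\gamma))\rangle)\bigr)$, which by the very definition of the quotient pseudometric equals $\limsup_{\mathcal{I}} d_\gamma(t_1^{\mathcal{A}_\gamma}(\bar a(\gamma)),t_2^{\mathcal{A}_\gamma}(\bar a(\gamma)))$.

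There is no serious obstacle here: the statement is essentially a reading off of the definitions of $d_{\mathcal{I}}$, of the function and predicate symbols on the quotient, and of $\pi_{_{\mathcal{I}}}$ on tuples. The only point that needs the slightest care is making sure the induction on terms is pushed through before invoking the predicate clause, so that the arguments of $P$ inside the $\limsup$ are genuinely the coordinate-wise term values. Well-definedness of all the quantities on both sides is guaranteed by the proposition proved just before, so nothing additional has to be checked.
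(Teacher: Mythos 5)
Your proof is correct and follows exactly the route the paper intends: the paper's own proof is the single sentence that the lemma ``easily follows from the definition of $d_{\mathcal{I}}$ and the interpretation of atomic formulas,'' and your term induction followed by the two-case analysis (predicate symbol versus the metric $d$) is precisely the unpacking of that sentence. Nothing is missing and nothing differs in substance.
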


  \begin{proof}
  This easily follows from the definition of $d_{\mathcal{I}}$ and the interpretation of atomic formulas.
  \end{proof}

\section{An extension of Feferman-Vaught theorem for reduced products of metric structures}\label{FV}

The evaluation of a non-atomic formula in reduced products turns out to be more complicated than the atomic case, see \cite{Lopes}.  In this section we give an extension of Feferman-Vaught theorem to reduced products of metric structures, which just like its classical version, gives a powerful tool to prove elementary equivalence of reduced products.

Suppose $\{\mathcal{A}_{\gamma}: \gamma \in \Omega\}$ is a family of metric structures in a fixed language $\mathcal{L}$ and $\mathcal{I}$ is an ideal on $\Omega$. For the purposes of this section let
\begin{equation}
\nonumber \mathcal{A}_{\Omega}= \prod_{\Omega} \mathcal{A}_{\gamma}, \qquad\qquad \mathcal{A}_{\mathcal{I}}= \prod_{\mathcal{I}} \mathcal{A}_{\gamma}.
\end{equation}
For an $\mathcal{L}$-formula $\varphi(\bar{x})$, a tuple $\bar{a}$ of elements of $\mathcal{A}$  and 
\begin{equation}
\nonumber X=\{\gamma\in \Omega : \varphi(\bar{a}(\gamma))^{\mathcal{A}_{\gamma}}> r\}
\end{equation}
for some $r\in\mathbb{R}$, we use $\tilde{X}$ to denote the set 
\begin{equation}
\nonumber \tilde{X}=\{\gamma\in \Omega : \varphi(\bar{a}(\gamma))^{\mathcal{A}_{\gamma}}\geq r\}.
\end{equation}
 \begin{definition}\label{det}
  For an $\mathcal{F}_{0}$-restricted $\mathcal{L}$-formula $\varphi(x_{1},\dots, x_{l})$, we say $\varphi$ is determined up to $2^{-n}$ by $( \sigma_{0},\dots,\sigma_{2^{n}}; \psi_{0}, \dots, \psi_{m-1})$ if
\begin{enumerate}
\item Each $\sigma_{i}$ is a formula in the language of Boolean algebras with at most $s=m2^{n}$ many variables, which is monotonic, i.e.,
             \begin{eqnarray}
             \nonumber && T_{BA}\vdash \forall y_{1}\dots, y_{s}, z_{1},\dots,z_{s}(\sigma_{i}(y_{1},\dots, y_{s})\wedge\bigwedge_{i=1}^{s}y_{i}\leq z_{i}\\
            \nonumber  && \qquad\qquad\qquad\qquad\qquad\qquad\rightarrow   \sigma_{i}(z_{1},\dots, z_{s})).
             \end{eqnarray}
             (Here, $T_{BA}$ denotes the theory of Boolean algebras.)
\item  Each $\psi_{j}(x_{1},\dots, x_{l})$ is an $\mathcal{F}_{0}$-restricted $\mathcal{L}$-formula for $j= 0, \dots, m-1$.
\item For any indexed set $\Omega$, an ideal $\mathcal{I}$ on $\Omega$, a family $\{\mathcal{A}_{\gamma}: \gamma\in \Omega\}$ of metric $\mathcal{L}$-structures and $a_{1}, \dots, a_{l}\in \mathcal{A}_{\Omega}$ the following hold: \\
    for every $ \ell=0, \dots, 2^{n}$
    \begin{eqnarray}
     \qquad\qquad P(\Omega)/\mathcal{I}\vDash \sigma_{\ell}([X^{0}_{0}]_{\mathcal{I}},\dots, [X^{0}_{2^{n}}]_{\mathcal{I}},\dots, [X^{m-1}_{0}]_{\mathcal{I}},\dots, [X^{m-1}_{2^{n}}]_{\mathcal{I}})&& \nonumber\\
     \qquad\qquad\Longrightarrow  &\varphi(\pi_{_{\mathcal{I}}}(\bar{a}))^{\mathcal{A}_{\mathcal{I}}}> \ell/2^{n},& \nonumber
     \end{eqnarray}
     and 
     \begin{eqnarray}         
   &\varphi(\pi_{_{\mathcal{I}}}(\bar{a}))^{\mathcal{A}_{\mathcal{I}}}> \ell/2^{n}&  \nonumber\\
  &\qquad\qquad \qquad\qquad\Longrightarrow 
   P(\Omega)/\mathcal{I}\vDash &  \sigma_{\ell}([\tilde{X}^{0}_{0}]_{\mathcal{I}},\dots, [\tilde{X}^{0}_{2^{n}}]_{\mathcal{I}},\dots, [\tilde{X}^{m-1}_{0}]_{\mathcal{I}},\dots, [\tilde{X}^{m-1}_{2^{n}}]_{\mathcal{I}})   \nonumber
    \end{eqnarray}
    
    where  $ X^{j}_{i}=\{\gamma\in \Omega : \psi_{j}(\bar{a}(\gamma))^{\mathcal{A}_{\gamma}}> i/2^{n}\}$ for each $j=0,\dots, m-1$ and $i=0, \dots, 2^{n}$.

\end{enumerate}
\end{definition}

Using Lemma \ref{F-restricted} we can generalize this definition to all $\mathcal{L}$-formulas.
\begin{definition}\label{generaldet}
We say an $\mathcal{L}$-formula $\varphi$ is determined up to $2^{-n}$ if there is an $\mathcal{F}_{0}$-restricted $\mathcal{L}$-formula $\tilde{\varphi}$ which is uniformly within $2^{-n-1}$ of $\varphi$ and $\tilde{\varphi}$ is determined up to $2^{-n}$ by some $( \sigma_{0},\dots,\sigma_{2^{n}}; \psi_{0}, \dots, \psi_{m-1})$.
\end{definition}

\begin{theorem}\label{FV}
Every formula is determined up to $2^{-n}$ for any given $n\in\mathbb{N}$.
\end{theorem}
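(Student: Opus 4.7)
The plan is to proceed by induction on the structure of $\mathcal{F}_{0}$-restricted $\mathcal{L}$-formulas, establishing the stronger statement that every such formula is determined up to $2^{-n}$ in the explicit sense of Definition \ref{det}. Proposition \ref{F-restricted} combined with Definition \ref{generaldet} then transfers the conclusion to all $\mathcal{L}$-formulas via uniform $2^{-n-1}$-approximation, so no further work is needed once the $\mathcal{F}_{0}$-restricted case is settled.

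For the atomic base case, Lemma \ref{atomic} identifies $\varphi(\pi_{\mathcal{I}}(\bar{a}))^{\mathcal{A}_{\mathcal{I}}}$ with $\limsup_{\mathcal{I}} \varphi(\bar{a}(\gamma))$, which exceeds $\ell/2^{n}$ iff $\{\gamma : \varphi(\bar{a}(\gamma)) > \ell/2^{n}\}$ is $\mathcal{I}$-positive; thus $m := 1$, $\psi_{0} := \varphi$, and $\sigma_{\ell}(y_{0}, \dots, y_{2^{n}}) := (y_{\ell} \neq 0)$ does the job (the completeness direction uses $X_{\ell} \subseteq \tilde{X}_{\ell}$, and monotonicity is immediate). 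For a connective $\varphi = u(\varphi_{1}, \dots, \varphi_{k})$ with $u \in F_{k}$, I would choose $n'$ so large that coordinate-wise $2^{-n'}$-proximity forces $2^{-n-2}$-proximity of the $u$-value, apply the induction hypothesis to each $\varphi_{i}$ at precision $2^{-n'}$, concatenate the $\psi$-families, and build $\sigma_{\ell}^{\varphi}$ as the disjunction---over grid points $\bar{r} \in (2^{-n'}\mathbb{Z})^{k} \cap [0,1]^{k}$ satisfying $u(\bar{r}) > \ell/2^{n} + 2^{-n-1}$---of the conjunctions ``$\varphi_{i}$ lies in $[r_{i}, r_{i} + 2^{-n'})$'', each assembled from the inductive $\sigma^{(i)}$'s. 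The $2^{-n-1}$ slack exactly absorbs the mismatch between the $X$-sets used for soundness and the $\tilde{X}$-sets used for completeness, and monotonicity is preserved because the construction involves only $\wedge$ and $\vee$ over monotone inputs.

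The quantifier case $\varphi(\bar{x}) = \sup_{y} \theta(\bar{x}, y)$ is the main obstacle. Given inductive data $(\sigma_{0}, \dots, \sigma_{2^{n}}; \psi_{0}, \dots, \psi_{m-1})$ determining $\theta$ up to $2^{-n}$, the key observation is that an element $b \in \mathcal{A}_{\mathcal{I}}$ lifts to $(b(\gamma))_{\gamma}$ whose entries may be chosen independently, so realizing $\sup_{y}\theta > \ell/2^{n}$ reduces to making a coordinate-wise choice arranging the level sets of the $\psi_{j}(\bar{a}(\gamma),b(\gamma))$ to satisfy $\sigma_{\ell}$. For each $\bar{i} \in \{0,\dots,2^{n}\}^{m}$ I would introduce an $\mathcal{F}_{0}$-restricted formula
\begin{equation*}
 \chi_{\bar{i}}(\bar{x}) := \inf_{y}\, \max_{j<m} \bigl( i_{j}/2^{n} \dotminus \psi_{j}(\bar{x}, y) \bigr),
\end{equation*}
whose vanishing at $\gamma$ witnesses the existence of a local $b(\gamma) \in \mathcal{A}_{\gamma}$ driving the joint profile of the $\psi_{j}$'s above the threshold $\bar{i}/2^{n}$. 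Taking $\{\chi_{\bar{i}}\}_{\bar{i}}$ as the new $\psi$-list, $\sigma^{\varphi}_{\ell}$ would be the first-order Boolean assertion that $[\Omega]_{\mathcal{I}}$ admits a finite partition $\{E_{\bar{i}}\}_{\bar{i}}$ with $E_{\bar{i}}$ contained in the level set where $\chi_{\bar{i}}$ is small, and such that the original $\sigma_{\ell}$ is satisfied when each $\psi_{j}$-level class is reconstituted from this partition; finiteness of the index range of $\bar{i}$ is what makes this a formula in the language of Boolean algebras. The hard part is reconciling two strict/non-strict mismatches---the asymmetry inherited from the inductive $\sigma_{\ell}$ on $\theta$, and the fact that $\inf_{y}$ need not be attained in a metric structure---which I would handle by refining to a grid of precision $2^{-n''}$ for some $n'' > n$ and letting $\sigma^{\varphi}_{\ell}$ be strictly more generous on its $\tilde{X}$-input side than on its $X$-input side. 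Monotonicity is preserved because all the Boolean operations used are positive ($\wedge$, $\vee$, and existential quantification over partitions), and the dual $\inf_{y}$ case follows by swapping the roles of $\dotminus$.
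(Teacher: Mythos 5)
Your overall architecture is the paper's: induct on $\mathcal{F}_{0}$-restricted formulas, prove exact determination in the sense of Definition \ref{det}, and let Proposition \ref{F-restricted} handle general formulas; the atomic base case is identical. But your connective step has a genuine gap. By approximating an arbitrary $u\in F_{k}$ on a grid and letting $\sigma_{\ell}^{\varphi}$ be the disjunction over grid points with $u(\bar r)>\ell/2^{n}+2^{-n-1}$, you cannot satisfy both implications of Definition \ref{det}(3) with the \emph{same} formula $\sigma_{\ell}$: if $\varphi(\pi_{\mathcal I}(\bar a))^{\mathcal A_{\mathcal I}}=\ell/2^{n}+2^{-n-3}$, say, then no grid point within $2^{-n-2}$ of the true values clears your threshold, and the direction $\varphi>\ell/2^{n}\Rightarrow\sigma_{\ell}([\tilde X])$ fails; "being more generous on the $\tilde X$ side" is not available, since monotonicity only lets the \emph{inputs} grow, not the threshold shrink. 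Moreover, encoding "$\varphi_{i}\in[r_{i},r_{i}+2^{-n'})$" requires a conjunct $\neg\sigma^{(i)}_{r_{i}+2^{-n'}}(\cdots)$, which is \emph{anti}-monotone in the level sets, so condition (1) is violated. The paper avoids both problems by observing that every $u\in F_{k}$ is a composite of the generators $0,1,x/2,\dotminus$ (so only these need be treated), and, for $\dotminus$, by adjoining the formulas $1-\psi^{2}_{j}$ to the $\psi$-list so that the negated $\sigma^{2}$ is applied to \emph{complemented} Boolean variables and the composite $\tau_{k}$ stays monotone. Your route would require reformulating the whole induction with an explicit error budget (an "approximately determined" notion), which is a different and more delicate statement than the one you set out to prove.

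In the quantifier case your idea is essentially the paper's — internalize the coordinate-wise choice of witness via auxiliary formulas indexed by joint threshold profiles, then existentially quantify in the Boolean algebra over a finite decomposition of $\Omega$ refining the level sets; your $\chi_{\bar i}=\inf_{y}\max_{j}(i_{j}/2^{n}\dotminus\psi_{j})$ is a reparametrization of the paper's $\theta_{k}=\sup_{z}\min\{\psi_{j}:j\in\bigcup_{i}s_{k}(i)\}$. However the sign convention is backwards for monotonicity: smallness of $\chi_{\bar i}$ is the favourable event, so "$E_{\bar i}$ is contained in the set where $\chi_{\bar i}$ is small" is \emph{downward} closed in the level sets $\{\gamma:\chi_{\bar i}(\bar a(\gamma))>t\}$ that serve as the Boolean inputs, again breaking condition (1); you would need to pass to $1-\chi_{\bar i}$ (equivalently, to the paper's $\sup\min$ form). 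Finally, the two places where the real work happens — reconciling strict versus non-strict level sets when $\inf_{y}$ is not attained, and reconstructing a single global witness $c\in\mathcal A_{\Omega}$ from the Boolean partition so that $\sigma_{\ell}$ holds of the level sets of $\psi_{j}(\bar a(\gamma),c(\gamma))$ — are only gestured at; in the paper these occupy the choice of $\delta$, the pointwise selection of $c(\gamma)$ on $D=\Omega\setminus S$, and the verification $Z^{j}_{i}\cap D\subseteq X^{j}_{i}$, and they are not routine.
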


\begin{proof}
By Definition \ref{generaldet} and Lemma \ref{F-restricted}, without loss of generality, we can assume that formulas are $\mathcal{F}_{0}$-restricted.
Assume $\varphi$ is an atomic $\mathcal{L}$-formula and for each $i\leq 2^{n}$ define
\begin{equation}
\nonumber \sigma_{i}(y_{0},\dots, y_{2^{n}}):=  y_{i}\neq 0.
\end{equation}
We show that $\varphi$ is determined up to $2^{-n}$ by $(\sigma_{0}, \dots, \sigma_{2^{n}}; \varphi)$.
Conditions (1) and (2) of Definition \ref{det} are clearly satisfied.
  For an indexed set $\Omega$, an ideal $\mathcal{I}$ on $\Omega$, a family $\{\mathcal{A}_{\gamma}: \gamma\in \Omega\}$ of metric $\mathcal{L}$-structures and $a_{1}, \dots, a_{l}\in \mathcal{A}_{\Omega}$ let
\begin{equation}
\nonumber X_{i}=\{\gamma\in \Omega : \varphi(\bar{a}(\gamma))^{\mathcal{A}_{\gamma}}> i/2^{n}\},
\end{equation}
since $\varphi(\pi_{_{\mathcal{I}}}(\bar{a}))^{\mathcal{A}_{\mathcal{I}}}=\limsup_{\mathcal{I}} \varphi(\bar{a}(\gamma))^{\mathcal{A}_{\gamma}}$ we have
\begin{eqnarray}
\nonumber  P(\Omega)/\mathcal{I}\vDash \sigma_{\ell}([X_{0}]_{\mathcal{I}},\dots, [X_{2^{n}}]_{\mathcal{I}})  &\Longleftrightarrow&  X_{\ell}\notin\mathcal{I}\\
\nonumber &\Longleftrightarrow& \varphi(\pi_{_{\mathcal{I}}}(\bar{a}))^{\mathcal{A}_{\mathcal{I}}}> \ell/2^{n}.
\end{eqnarray}
Since each $X_{i}^{j}\subseteq \tilde{X}_{i}^{j}$, by the monotonicity of $\sigma_{\ell}$, $\varphi(\pi_{_{\mathcal{I}}}(\bar{a}))^{\mathcal{A}_{\mathcal{I}}}> \ell/2^{n}$ also implies that $ P(\Omega)/\mathcal{I}\vDash \sigma_{\ell}([\tilde{X}_{0}]_{\mathcal{I}},\dots, [\tilde{X}_{2^{n}}]_{\mathcal{I}})$.  
 Thus $\varphi$ is determined up to $2^{-n}$ by $(\sigma_{0}, \dots, \sigma_{2^{n}}; \varphi)$.

Assume $\varphi(\bar{x})=f(\alpha(\bar{x}))$ where $f\in\{0,1, x/2\}$ and $\alpha$ is some $\mathcal{L}$-formula determined up to $2^{-n}$ by $(\sigma_{0}, \dots, \sigma_{2^{n}}; \psi_{0}, \dots, \psi_{m-1})$. The cases where $f\in\{0,1\}$ are trivial; for example if $f=0$ then $\varphi(\bar{x})$ is determined up to $2^{-n}$ by $(\tau_{0}, \dots, \tau_{2^{n}}; 0)$ where   $\tau_{i}:= 1\neq 1$ for each $i= 0,\dots, 2^{n}$.
  If $f(x)=x/2$  then  it is also straightforward to check that $\varphi$ is determined up to $2^{-n-1}$ by $(\sigma_{0}, \dots, \sigma_{2^{n}},\tau_{2^{n}+1}, \dots, \tau_{2^{n+1}}; f(\psi_{0}), \dots, f(\psi_{m-1}))$, where each $\tau_{i}$ is a false sentence (e.g, $1\neq 1$).

Let $\varphi(\bar{x})= \alpha_{1}(\bar{x}) \dotminus \alpha_{2}(\bar{x})$ where each $\alpha_{t}$ $(t\in\{1,2\})$ is determined up to $2^{-n}$ by $(\sigma_{0}^{t}, \dots, \sigma_{2^{n}}^{t}; \psi^{t}_{0}, \dots, \psi^{t}_{m_{t}-1})$. We claim that $\varphi$ is determined up to $2^{-n}$ by $(\tau_{0}, \dots, \tau_{2^{n}}; \psi^{1}_{0}, \dots \\, \psi^{1}_{m_{1}-1}, 1-\psi^{2}_{0}, \dots, 1-\psi^{2}_{m_{2}-1})$ where the Boolean algebra formulas $\tau_{k}$ are defined by
\begin{eqnarray}
\nonumber &&\tau_{k}(x_{0}^{0}, \dots, x_{2^{n}}^{0}, \dots,  x_{0}^{m_{1}-1},\dots ,x_{2^{n}}^{m_{1}-1} , z_{0}^{0}, \dots, z_{2^{n}}^{0},\dots, z_{0}^{m_{2}-1},\dots ,z_{2^{n}}^{m_{2}-1}):=\\
 \nonumber && \qquad\qquad\qquad\qquad\bigvee_{i_{0}=k}^{2^{n}}\big[\sigma_{i_{0}}^{1}(x_{0}^{0}, \dots, x_{2^{n}}^{0}, \dots,  x_{0}^{m_{1}-1},\dots ,x_{2^{n}}^{m_{1}-1})\\
 \nonumber && \qquad\qquad\qquad\qquad \wedge \neg\sigma_{i_{0}-k}^{2}(-z_{2^{n}}^{0}, \dots, -z_{0}^{0}, \dots, -z_{2^{n}}^{m_{2}-1}, \dots, -z_{0}^{m_{2}-1} )\big],
\end{eqnarray}
(here $-z$ is the Boolean algebra complement of $z$).
 Conditions (1) and (2) in Definition \ref{det} are clearly satisfied. For (3) let  $\mathcal{A}_{\mathcal{I}}$  be a reduced product of $\mathcal{L}$-structures (indexed by $\Omega$ and over an ideal $\mathcal{I}$) and $a_{1}, \dots, a_{l}\in \mathcal{A}_{\Omega}$. Let
 \begin{equation}
 \nonumber X_{i}^{j}= \{\gamma\in \Omega : \psi^{1}_{j}(\bar{a}(\gamma))^{\mathcal{A}_{\gamma}} > i/2^{n}\} \qquad\qquad 0\leq j\leq m_{1}-1,
 \end{equation}
  \begin{equation}
 \nonumber Y_{i}^{j}= \{\gamma\in \Omega : \psi_{j}^{2}(\bar{a}(\gamma))^{\mathcal{A}_{\gamma}} > i/2^{n} \} \qquad\qquad 0\leq j\leq m_{2}-1,
 \end{equation}
 and
   \begin{equation}
 \nonumber \quad Z_{i}^{j}= \{\gamma\in \Omega : 1-\psi_{j}^{2}(\bar{a}(\gamma))^{\mathcal{A}_{\gamma}} > i/2^{n}\} \qquad\qquad 0\leq j\leq m_{2}-1.
 \end{equation}
Note that $\tilde{Y}_{2^{n}- i}^{j}= (Z_{i}^{j})^{c}$ for each $i$ and $j$. Assume 
\begin{equation}
  \nonumber  P(\Omega)/\mathcal{I}\vDash \tau_{k}([X^{0}_{0}]_{\mathcal{I}},\dots, [X^{m_{1}-1}_{2^{n}}]_{\mathcal{I}}, [Z^{0}_{0}]_{\mathcal{I}},\dots, [Z^{m_{2}-1}_{2^{n}}]_{\mathcal{I}}),
\end{equation}
then  for some  $i_{0}\geq k$,
\begin{equation}
  \nonumber \qquad\qquad P(\Omega)/\mathcal{I}\vDash \sigma^{1}_{i_{0}}([X^{0}_{0}]_{\mathcal{I}},\dots, [X^{m_{1}-1}_{2^{n}}]_{\mathcal{I}})  
     \wedge  \neg\sigma^{2}_{i_{0}-k}([(Z^{0}_{2^{n}})^{c}]_{\mathcal{I}},\dots, [(Z^{m_{1}-1}_{0})^{c}]_{\mathcal{I}}),
\end{equation}
\begin{eqnarray}   
  \qquad\qquad\quad \Longrightarrow  P(\Omega)/\mathcal{I}\vDash \sigma^{1}_{i_{0}}([X^{0}_{0}]_{\mathcal{I}},\dots, [X^{m_{1}-1}_{2^{n}}]_{\mathcal{I}})
 \wedge \neg \sigma^{2}_{i_{0}-k}([\tilde{Y}^{0}_{0}]_{\mathcal{I}},\dots, [\tilde{Y}^{m_{1}-1}_{2^{n}}]_{\mathcal{I}}), \nonumber
 \end{eqnarray}
and therefore
\begin{eqnarray}    
    \alpha_{1}(\pi_{_{\mathcal{I}}}(\bar{a}))^{\mathcal{A}_{\mathcal{I}}}> i_{0}/2^{n} \text{ and } \alpha_{2}(\pi_{_{\mathcal{I}}}(\bar{a}))^{\mathcal{A}_{\mathcal{I}}}\leq (i_{0}-k)/2^{n}.\nonumber 
\end{eqnarray}
Hence $\varphi(\pi_{_{\mathcal{I}}}(\bar{a}))^{\mathcal{A}_{\mathcal{I}}}> k/2^{n}$. To prove the other direction assume $\varphi(\pi_{_{\mathcal{I}}}(\bar{a}))^{\mathcal{A}_{\mathcal{I}}}> k/2^{n}$. For some  $i_{0}\geq k$,
\begin{eqnarray}    
    \alpha_{1}(\pi_{_{\mathcal{I}}}(\bar{a}))^{\mathcal{A}_{\mathcal{I}}}> i_{0}/2^{n} \text{ and } \alpha_{2}(\pi_{_{\mathcal{I}}}(\bar{a}))^{\mathcal{A}_{\mathcal{I}}}\leq (i_{0}-k)/2^{n}.\nonumber 
\end{eqnarray}
By the induction assumptions 
\begin{eqnarray}   
P(\Omega)/\mathcal{I}\vDash \sigma^{1}_{i_{0}}([\tilde{X}^{0}_{0}]_{\mathcal{I}},\dots, [\tilde{X}^{m_{1}-1}_{2^{n}}]_{\mathcal{I}})
 \wedge \neg \sigma^{2}_{i_{0}-k}([\tilde{Y}^{0}_{0}]_{\mathcal{I}},\dots, [\tilde{Y}^{m_{1}-1}_{2^{n}}]_{\mathcal{I}}), \nonumber
 \end{eqnarray}
and note that  $(\tilde{Z}_{2^{n}-i}^{j})^{c}\subseteq \tilde{Y}_{i}^{j}$ for each $i$ and $j$, which implies 
\begin{equation}
  P(\Omega)/\mathcal{I}\vDash \sigma^{1}_{i_{0}}([\tilde{X}^{0}_{0}]_{\mathcal{I}},\dots, [\tilde{X}^{m_{1}-1}_{2^{n}}]_{\mathcal{I}})  
     \wedge  \neg\sigma^{2}_{i_{0}-k}([(\tilde{Z}^{0}_{2^{n}})^{c}]_{\mathcal{I}},\dots, [(\tilde{Z}^{m_{1}-1}_{0})^{c}]_{\mathcal{I}}), \nonumber
\end{equation}
so
\begin{equation}
  \nonumber   P(\Omega)/\mathcal{I}\vDash \tau_{k}([\tilde{X}^{0}_{0}]_{\mathcal{I}},\dots, [\tilde{X}^{m_{1}-1}_{2^{n}}]_{\mathcal{I}}, [\tilde{Z}^{0}_{0}]_{\mathcal{I}},\dots, [\tilde{Z}^{m_{2}-1}_{2^{n}}]_{\mathcal{I}}).
\end{equation}
Therefore $\varphi$ is determined up to $2^{-n}$ by $(\tau_{0}, \dots, \tau_{2^{n}}; \psi^{1}_{0}, \dots, \psi^{1}_{m_{1}-1},1-\psi^{2}_{0}, \dots, 1-\psi^{2}_{m_{2}-1})$.

Assume $\varphi(\bar{x})=\sup_{z}\psi(\bar{x}, z)$ where $\psi$ is determined  by $(\sigma_{0},\dots,\sigma_{2^{n}}; \psi_{0}, 
\dots, \psi_{m-1})$ up to $2^{-n}$.  Let $d=2^{n+m}-1$ and $s_{0},\dots, s_{d-1}$ be an enumeration of non-empty elements of $\prod_{i=0}^{2^{n}} P( \{0, \dots, m-1\})$, i.e,  each $s_{k}=(s_{k}(0),\dots, s_{k}(2^{n}))$ where $s_{k}(i)\subseteq \{0, \dots, m-1\}$ for each $i$. Also assume that for each $0\leq k\leq m-1$ we have $s_{k}=\{\{k\}, \emptyset, \dots, \emptyset\}$.  For any $k\in\{0, \dots, d-1\}$ define an  $\mathcal{L}$-formula $\theta_{k}$ by
\begin{equation}
\nonumber \theta_{k}(\bar{x})= \sup_{z} \min \big\{\psi_{j}(\bar{x}, z): j\in \bigcup_{i=0}^{2^{n}} s_{k}(i)\big\}.
\end{equation}
Note that if $0\leq k\leq m-1$ then $\theta_{k}(\bar{x})=  \sup_{z}\psi_{k}(\bar{x}, z)$.
For each $i\in \{0, \dots, 2^{n}\}$ define a Boolean algebra formula $\tau_{i}$ by
\begin{eqnarray}
   \nonumber \tau_{i}(y_{0}^{0},\dots, y^{0}_{2^{n}}, \dots, y_{0}^{d-1},\dots, y^{d-1}_{2^{n}})&=& \exists  z_{0}^{0},\dots, z^{0}_{2^{n}}, \dots, z_{0}^{d-1},\dots, z^{d-1}_{2^{n}}\\
   \nonumber && [\bigwedge_{j=0}^{d-1}\bigwedge_{i=0}^{2^{n}}(z_{i}^{j}\leq y_{i}^{j})\wedge \bigwedge_{i=0}^{2^{n}}\bigwedge_{\substack{ s_{k}(t)\cup s_{k^{\prime}}(t)=s_{k^{\prime\prime}}(t)\\ \forall t}} (z_{i}^{k}.z_{i}^{k^{\prime}}= z_{i}^{k^{\prime\prime}}) \\
  \nonumber && \wedge~~ \sigma_{i}(z^{0}_{0},\dots, z^{0}_{2^{n}},\dots, z^{m-1}_{0},\dots, z^{m-1}_{2^{n}})].
\end{eqnarray}

We claim that $\varphi$ is determined up to $2^{-n}$ by $(\tau_{0},\dots,\tau_{2^{n}}; \theta_{0}, \dots, \theta_{d-1})$.
Again condition (1) is clearly satisfied. Condition (2) is also satisfied, since $\min\{x,y\}= x \dotminus(x \dotminus y)$. For (3) assume a reduced product $\mathcal{A}_{\mathcal{I}}$ and $a_{1}, \dots, a_{l}\in \mathcal{A}_{\Omega}$ are given.

 First assume  $ \varphi(\pi_{_{\mathcal{I}}}(\bar{a}))^{\mathcal{A}_{\mathcal{I}}}> \ell/2^{n}$ for some $\ell$.
  Let
  \begin{equation}
  \nonumber \delta= \frac{\min\{\varphi(\pi_{_{\mathcal{I}}}(\bar{a}))^{\mathcal{A}_{\mathcal{I}}}- \ell/2^{n}, 1/2^{n}\}}{2}
  \end{equation}
  and find $c= (c(\gamma))_{\gamma\in\Omega}$ such that
\begin{equation}
\nonumber \psi(\pi_{_{\mathcal{I}}}(\bar{a}, c))^{\mathcal{A}_{\mathcal{I}}}>  \varphi(\pi_{_{\mathcal{I}}}(\bar{a}))^{\mathcal{A}_{\mathcal{I}}}- \delta > \ell/2^{n}.
\end{equation}
 For each $i\leq 2^{n}$ and $k\leq d-1$ let
\begin{equation}
\nonumber Y_{i}^{k}= \{\gamma\in \Omega: \theta_{k}(\bar{a}(\gamma))^{\mathcal{A}_{\gamma}}> i/2^{n}\},
\end{equation}
and let
\begin{equation}
\nonumber Z_{i}^{k}= \{\gamma\in \Omega:  \min \{\psi_{j}(\bar{x}(\gamma), c(\gamma)): j\in \bigcup_{t=0}^{2^{n}} s_{k}(t)\}^{\mathcal{A}_{\gamma}}> i/2^{n}\}.
\end{equation}
From definition of $\theta_{k}$ it is clear that $\tilde{Z}_{i}^{k}\subseteq \tilde{Y}_{i}^{k}$, and
\begin{equation}
\nonumber \bigwedge_{i=0}^{2^{n}}\bigwedge_{\substack{ s_{k}(t)\cup s_{k^{\prime}}(t)=s_{k^{\prime\prime}}(t)\\ \forall t}} (\tilde{Z}_{i}^{k}\cap \tilde{Z}_{i}^{k^{\prime}}= \tilde{Z}_{i}^{k^{\prime\prime}}),
\end{equation}
and by the inductive assumption
\begin{equation}
\nonumber P(\Omega)/\mathcal{I}\vDash \sigma_{\ell}([\tilde{Z}^{0}_{0}]_{\mathcal{I}},\dots, [\tilde{Z}^{0}_{2^{n}}]_{\mathcal{I}},\dots, [\tilde{Z}^{m-1}_{0}]_{\mathcal{I}},\dots, [\tilde{Z}^{m-1}_{2^{n}}]_{\mathcal{I}}).
\end{equation}
Hence $P(\Omega)/\mathcal{I}\vDash \tau_{\ell}([\tilde{Y}^{0}_{0}]_{\mathcal{I}},\dots, [\tilde{Y}^{0}_{2^{n}}]_{\mathcal{I}},\dots, [\tilde{Y}^{d-1}_{0}]_{\mathcal{I}},\dots, [\tilde{Y}^{d-1}_{2^{n}}]_{\mathcal{I}})$.

For the other direction let
\begin{equation}
\nonumber Y_{i}^{k}= \{\gamma\in \Omega: \theta_{k}(\bar{a}(\gamma))^{\mathcal{A}_{\gamma}}> i/2^{n}\},
\end{equation}
 and suppose $P(\Omega)/\mathcal{I}\vDash \tau_{\ell}([Y^{0}_{0}]_{\mathcal{I}},\dots, [Y^{0}_{2^{n}}]_{\mathcal{I}},\dots, [Y^{d-1}_{0}]_{\mathcal{I}},\dots, [Y^{d-1}_{2^{n}}]_{\mathcal{I}})$. There are sets $Z_{0}^{0},\dots, Z^{0}_{2^{n}}, \dots, Z_{0}^{d-1},\dots, Z^{d-1}_{2^{n}}$ such that the following hold.
 \begin{eqnarray}\label{1}
  \nonumber &&[Z_{i}^{k}]_{\mathcal{I}} \subseteq [Y_{i}^{k}]_{\mathcal{I}} \qquad\qquad\qquad\qquad\qquad\qquad 0\leq i\leq 2^{n},  0\leq k \leq d-1,\\
  \nonumber &&[Z_{i}^{k}]_{\mathcal{I}} \cap  [Z_{i}^{k^{\prime}}]_{\mathcal{I}}=   [Z_{i}^{k^{\prime\prime}}]_{\mathcal{I}} \qquad\qquad\qquad\qquad \forall t~~s_{k}(t)\cup  s_{k^{\prime}}(t)=s_{k^{\prime\prime}}(t), \\
  \nonumber &&P(\Omega)/\mathcal{I}\vDash \sigma_{\ell}([Z^{0}_{0}]_{\mathcal{I}},\dots, [Z^{0}_{2^{n}}]_{\mathcal{I}},\dots, [Z^{m-1}_{0}]_{\mathcal{I}},\dots, [Z^{m-1}_{2^{n}}]_{\mathcal{I}}).
 \end{eqnarray}
Since there are only finitely many conditions above, we can find a set $S\in \mathcal{I}$ such that if  $D= \Omega\setminus S$ then

 \begin{eqnarray}\label{2}
   &&Z_{i}^{k} \cap D \subseteq Y_{i}^{k} \qquad\qquad\qquad\qquad\qquad 0\leq i\leq 2^{n},  0\leq k \leq d-1,\\
  \nonumber &&Z_{i}^{k}\cap  Z_{i}^{k^{\prime}}\cap D =   Z_{i}^{k^{\prime\prime}} \cap D \qquad\qquad\qquad \forall t~~ s_{k}(t)\cup  s_{k^{\prime}}(t)=s_{k^{\prime\prime}}(t),
 \end{eqnarray}
 Fix $\gamma \in D$, and for each $i\in \{0,\dots , 2^{n}\}$ let $u(i)= \{j\in \{0,\dots, m-1\} : \gamma \in Z^{j}_{i}\}$.
 If $k\in\{0, \dots , d-1\}$ be such that $s_{k}=(u(0), \dots, u(2^{n}))$, then since $\gamma\in Z^{j}_{i}$  for all $j\in u(i)$, using
 (\ref{2})  we have $\gamma \in Y_{i}^{k}$ (for all $i$) and hence

 \begin{equation}
 \nonumber \theta_{k}(\bar{a}(\gamma))^{\mathcal{A}_{\mathcal{I}}}=\sup_{z}\min_{j\in \cup_{t=0}^{2^{n}} u(t)} \psi_{j}(\bar{a}(\gamma), z)> i/2^{n}.
 \end{equation}
  Let
  \begin{equation}
  \nonumber \delta= \frac{\min_{i,k}\{\theta_{k}(\bar{a}(\gamma))^{\mathcal{A}_{\mathcal{I}}}- i/2^{n}, 1/2^{n}\}}{2}.
  \end{equation}
   We can pick $c(\gamma)\in A_{\gamma}$  such that for every $i$

 \begin{equation}
 \label{3} \min_{j\in u(i)} \psi_{j}(\bar{a}(\gamma), c(\gamma))> \theta_{k}(a(\gamma)) - \delta \geq i/2^{n}.
 \end{equation}
For $\gamma\notin D$ define $c(\gamma)$ arbitrarily and let $c=(c(\gamma))_{\gamma \in \Omega }$.
 For each $j\in \{0,\dots, m-1\}$ and $i\in\{0,\dots, 2^{n}\}$ let

 \begin{equation}
 \nonumber X^{j}_{i}=\{\gamma \in \Omega : \psi_{j}(\bar{a}(\gamma), c(\gamma))^{\mathcal{A}_{\gamma}}> i/2^{n}\}.
 \end{equation}
 Now (\ref{2}) and (\ref{3}) imply that $Z_{i}^{j}\cap D \subseteq X_{i}^{j}$ for all $i$ and $j$. Therefore
 \begin{equation}
 \nonumber P(\Omega)/\mathcal{I} \vDash \bigwedge_{j=0}^{m-1}\bigwedge_{i=0}^{2^{n}} [Z_{i}^{j}]_{\mathcal{I}} \leq [X_{i}^{j}]_{\mathcal{I}}.
 \end{equation}
 Since $P(\Omega)/\mathcal{I}\vDash \sigma_{\ell}([Z^{0}_{0}]_{\mathcal{I}},\dots, [Z^{0}_{2^{n}}]_{\mathcal{I}},\dots, [Z^{m-1}_{0}]_{\mathcal{I}},\dots, [Z^{m-1}_{2^{n}}]_{\mathcal{I}})$, by monotonicity of $\sigma_{\ell}$ we have
 \begin{equation}
 \nonumber P(\Omega)/\mathcal{I} \vDash \sigma_{\ell}([X^{0}_{0}]_{\mathcal{I}},\dots, [X^{0}_{2^{n}}]_{\mathcal{I}},\dots, [X^{m-1}_{0}]_{\mathcal{I}}, \dots,   [X^{m-1}_{2^{n}}]_{\mathcal{I}}).
 \end{equation}
 Therefore by the induction assumption we have
 \begin{equation}
 \nonumber \psi(\pi_{_{\mathcal{I}}}(\bar{a},c))^{\mathcal{A}_{\mathcal{I}}}> \ell/2^{n},
 \end{equation}
 which implies that $ \varphi(\pi_{_{\mathcal{I}}}(\bar{a}))^{\mathcal{A}_{\mathcal{I}}}> \ell/2^{n}$.
\end{proof}

\begin{remark}\label{remark-FV}  From the proof of Theorem \ref{FV} at each step of the induction, it is straightforward (however lengthy) to check  that  for each   $\ell \in \{1,\dots, 2^n\}$, the Boolean algebra formulas $\sigma_\ell$ have the property that 
\begin{align}
\nonumber T_{BA} \vDash \sigma_{\ell-1}(z_0^0,\dots,z_{2^n}^{m-1})\leftrightarrow 
\sigma_\ell (&\Omega,
z_0^0,z_1^0,\dots,z_{2^n-1}^0,  \Omega,
z_0^1,z_1^1,\dots,z_{2^n-1}^1,\\
\nonumber & \dots, \Omega,
z_0^{m-1},z_1^{m-1},\dots,z_{2^n-1}^{m-1}). 
\end{align}
\end{remark}
\begin{lemma}\label{weaker lemma}
Assume $\varphi(\bar x)$ is an $\mathcal F_0$-restricted $\mathcal L$-formula which is determined up to $2^n$ by $( \sigma_{0},\dots,\sigma_{2^{n}}; \psi_{0}, \dots, \psi_{m-1})$.  Assume $\mathcal A_\mathcal I$ is a reduced product over an ideal $\mathcal I$  and $a_{1}, \dots, a_{l}\in \mathcal{A}_{\Omega}$. 
Then for each $\ell\in\{1, \dots, 2^n\}$
    \begin{align}
  \nonumber  P(\Omega)/\mathcal{I}\vDash &\sigma_{\ell}([\tilde X^{0}_{0}]_{\mathcal{I}},\dots, [\tilde X^{0}_{2^{n}}]_{\mathcal{I}},\dots, [\tilde X^{m-1}_{0}]_{\mathcal{I}},\dots, [\tilde X^{m-1}_{2^{n}}]_{\mathcal{I}}) \\
   \nonumber  &\Longrightarrow  \varphi(\pi_{_{\mathcal{I}}}(\bar{a}))^{\mathcal{A}_{\mathcal{I}}}> (\ell-1)/2^{n}, 
     \end{align}
where     $ X^{j}_{i}=\{\gamma\in \Omega : \psi_{j}(\bar{a}(\gamma))^{\mathcal{A}_{\gamma}}> i/2^{n}\}$, $j\in\{0,\dots, m-1\}$ and $i\in \{0, \dots, 2^{n}\}$.
\end{lemma}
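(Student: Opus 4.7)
The plan is to bootstrap the conclusion from condition (3) of Definition \ref{det} by exploiting the shifting property of the Boolean formulas $\sigma_\ell$ recorded in Remark \ref{remark-FV}. The basic issue is that the hypothesis uses the closed sets $\tilde X^j_i=\{\gamma:\psi_j(\bar a(\gamma))^{\mathcal A_\gamma}\geq i/2^n\}$, whereas Definition \ref{det}(3) is formulated with the open sets $X^j_i=\{\gamma:\psi_j(\bar a(\gamma))^{\mathcal A_\gamma}> i/2^n\}$. The idea is to trade one grid point (losing $1/2^n$ in the threshold) for converting $\tilde X$'s into $X$'s, using monotonicity and the shift identity.

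Here are the steps I would carry out. First, I would observe the two elementary set inclusions: $\tilde X^j_0=\Omega$ for every $j$ (since the formula $\psi_j$ is $[0,1]$-valued), and $\tilde X^j_i\subseteq X^j_{i-1}$ for every $i\in\{1,\dots,2^n\}$ (since $\psi_j(\bar a(\gamma))\geq i/2^n$ implies $\psi_j(\bar a(\gamma))>(i-1)/2^n$). These inclusions pass to $P(\Omega)/\mathcal I$. Second, I would apply the monotonicity clause (1) of Definition \ref{det} to the hypothesis
\[
 P(\Omega)/\mathcal{I}\vDash \sigma_{\ell}([\tilde X^{0}_{0}]_{\mathcal{I}},\dots, [\tilde X^{0}_{2^{n}}]_{\mathcal{I}},\dots, [\tilde X^{m-1}_{0}]_{\mathcal{I}},\dots, [\tilde X^{m-1}_{2^{n}}]_{\mathcal{I}}),
\]
replacing $[\tilde X^j_0]_{\mathcal I}$ by $[\Omega]_{\mathcal I}$ and $[\tilde X^j_i]_{\mathcal I}$ by $[X^j_{i-1}]_{\mathcal I}$ for $i\geq 1$. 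This yields
\[
 P(\Omega)/\mathcal{I}\vDash \sigma_{\ell}\bigl([\Omega]_{\mathcal I},[X^{0}_{0}]_{\mathcal{I}},\dots,[X^{0}_{2^n-1}]_{\mathcal I},\,[\Omega]_{\mathcal I},[X^{1}_{0}]_{\mathcal{I}},\dots,[X^{m-1}_{2^n-1}]_{\mathcal{I}}\bigr).
\]

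Third, I would invoke Remark \ref{remark-FV}, which asserts exactly that this last displayed sentence is equivalent (in $T_{BA}$, hence in the Boolean algebra $P(\Omega)/\mathcal I$) to
\[
 P(\Omega)/\mathcal{I}\vDash \sigma_{\ell-1}([X^{0}_{0}]_{\mathcal{I}},\dots, [X^{0}_{2^{n}}]_{\mathcal{I}},\dots, [X^{m-1}_{0}]_{\mathcal{I}},\dots, [X^{m-1}_{2^{n}}]_{\mathcal{I}}).
\]
Finally, applying clause (3) of Definition \ref{det} (the first of the two implications, using the strict sets $X^j_i$) to the index $\ell-1$ delivers $\varphi(\pi_{_{\mathcal I}}(\bar a))^{\mathcal A_{\mathcal I}}>(\ell-1)/2^n$, as desired.

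No part of this plan requires new ideas: the content lies entirely in the combination of monotonicity with the shift identity of Remark \ref{remark-FV}. The only thing one has to be careful about is bookkeeping, in particular that $\ell\in\{1,\dots,2^n\}$ makes $\ell-1\in\{0,\dots,2^n-1\}$ a legitimate index for which the original Definition \ref{det} applies, and that the hypotheses of Remark \ref{remark-FV} are triggered by the appearance of the top element $[\Omega]_{\mathcal I}$ at each block-initial coordinate $y^j_0$ exactly once. If any step could be considered the main obstacle, it is verifying Remark \ref{remark-FV} itself (already recorded in the paper as a straightforward but lengthy induction on the construction of $\sigma_\ell$ in the proof of Theorem \ref{FV}); given that remark, the lemma is essentially an immediate corollary.
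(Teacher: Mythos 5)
Your proposal is correct and follows essentially the same route as the paper's own proof: the inclusions $\tilde X^j_0=\Omega$ and $\tilde X^j_i\subseteq X^j_{i-1}$, monotonicity of $\sigma_\ell$, the shift identity of Remark \ref{remark-FV}, and then Definition \ref{det}(3) at index $\ell-1$. The bookkeeping points you flag (that $\ell-1$ is a legitimate index and that $[\Omega]_{\mathcal I}$ appears exactly once per block) are exactly the right ones, so nothing further is needed.
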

\begin{proof}
Assume 
$$
 P(\Omega)/\mathcal{I}\vDash \sigma_{\ell}([\tilde X^{0}_{0}]_{\mathcal{I}},\dots, [\tilde X^{0}_{2^{n}}]_{\mathcal{I}},\dots, [\tilde X^{m-1}_{0}]_{\mathcal{I}},\dots, [\tilde X^{m-1}_{2^{n}}]_{\mathcal{I}}) 
$$
and note that $\tilde X_0^j = \Omega$ and 
 $\tilde X_i^j \subseteq X_{i-1}^j$ for each $j\in\{0,\dots, m-1\}$ and $i\in \{1, \dots, 2^{n}\}$. 
 Thus by the  monotonicity of $\sigma_\ell$ we have
   \begin{equation}
 \nonumber P(\Omega)/\mathcal{I} \vDash \sigma_{\ell}([\Omega]_\mathcal I, [{X}^{0}_{0}]_{\mathcal{I}},\dots,    [{X}^{0}_{2^{n}-1}]_{\mathcal{I}}, \dots,  [\Omega]_\mathcal I, [{X}^{m-1}_{0}]_{\mathcal{I}}, \dots, [{X}^{m-1}_{2^{n}}]_{\mathcal{I}}),
  \end{equation}
    which   implies that (Remark \ref{remark-FV})
    \begin{equation}
 \nonumber P(\Omega)/\mathcal{I} \vDash \sigma_{\ell-1}([{X}^{0}_{0}]_{\mathcal{I}},\dots, [{X}^{m-1}_{2^{n}}]_{\mathcal{I}}).
  \end{equation}
 Therefore $ \varphi(\pi_{_{\mathcal{I}}}(\bar{a}))^{\mathcal{A}_{\mathcal{I}}}> (\ell-1)/2^{n}$.

\end{proof}

Let us give some easy applications of Theorem \ref{FV}.
Assume $\{\mathcal{A}_{\gamma}: \gamma\in \Omega\}$ and $\{\mathcal{B}_{\gamma}: \gamma\in \Omega\}$ are families of metric $\mathcal{L}$-structures indexed by $\Omega$ and for an ideal $\mathcal{I}$ on $\Omega$ let $\mathcal{A}_{\mathcal{I}}$ and $\mathcal{B}_{\mathcal{I}}$ denote the corresponding reduced products over $\mathcal{I}$. Next proposition shows that if  each $\mathcal{A}_{\gamma}\equiv \mathcal{B}_{\gamma}$ for $\gamma\in \Omega$ then $\mathcal{A}_{\mathcal{I}}$ and $\mathcal{B}_{\mathcal{I}}$ are also elementarily equivalent.
\begin{proposition}\label{preservation}
Reduced products, direct products and ultraproducts preserve elementary equivalence.
\end{proposition}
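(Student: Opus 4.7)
The strategy is to reduce all three statements to a single application of the metric Feferman--Vaught theorem (Theorem \ref{FV}), exploiting the fact that direct products and ultraproducts are themselves reduced products: direct products correspond to the trivial ideal $\mathcal{I}=\{\emptyset\}$ (where $d_\mathcal{I}$ is the supremum metric and the equivalence relation is equality), and ultraproducts over an ultrafilter $\mathcal{U}$ are exactly the reduced products over the dual maximal ideal, as noted just above Lemma \ref{atomic}. So it suffices to show that if $\mathcal{A}_\gamma \equiv \mathcal{B}_\gamma$ for every $\gamma \in \Omega$ and $\mathcal{I}$ is any ideal on $\Omega$, then $\mathcal{A}_\mathcal{I} \equiv \mathcal{B}_\mathcal{I}$.

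Fix a sentence $\varphi \in \mathbb{F}_0$ and $n \in \mathbb{N}$. By Proposition \ref{F-restricted} together with Definitions \ref{det} and \ref{generaldet}, Theorem \ref{FV} produces an $\mathcal{F}_0$-restricted formula $\tilde{\varphi}$ within $2^{-n-1}$ of $\varphi$ uniformly in every $\mathcal{L}$-structure, together with Boolean algebra formulas $\sigma_0,\dots,\sigma_{2^n}$ and $\mathcal{F}_0$-restricted $\mathcal{L}$-formulas $\psi_0,\dots,\psi_{m-1}$ witnessing that $\tilde{\varphi}$ is determined up to $2^{-n}$. A routine inspection of the inductive construction in the proof of Theorem \ref{FV} (atomic, $\dotminus$, $x/2$, and $\sup_z$ steps) shows that the free variables of each $\psi_j$ are contained in those of $\tilde{\varphi}$; since $\varphi$, and hence $\tilde{\varphi}$, is a sentence, each $\psi_j$ is also a sentence.

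The crucial observation is now transparent: because every $\psi_j$ is a sentence and $\mathcal{A}_\gamma \equiv \mathcal{B}_\gamma$, we have $\psi_j^{\mathcal{A}_\gamma} = \psi_j^{\mathcal{B}_\gamma}$ for every $\gamma$ and every $j$. Consequently the sets
\[
X^j_i = \{\gamma : \psi_j(\bar a(\gamma))^{\mathcal{A}_\gamma} > i/2^n\}, \qquad \tilde X^j_i = \{\gamma : \psi_j(\bar a(\gamma))^{\mathcal{A}_\gamma} \geq i/2^n\}
\]
(which are vacuously indexed since the $\psi_j$ are sentences) are literally identical when computed with $\mathcal{A}_\gamma$ or with $\mathcal{B}_\gamma$. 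So the Boolean algebra $P(\Omega)/\mathcal{I}$ cannot distinguish the two sides.

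Suppose $\tilde{\varphi}^{\mathcal{A}_\mathcal{I}} > \ell/2^n$ for some $\ell \in \{1,\dots,2^n\}$. By Definition \ref{det}(3), $P(\Omega)/\mathcal{I} \vDash \sigma_\ell([\tilde X^0_0]_\mathcal{I},\dots,[\tilde X^{m-1}_{2^n}]_\mathcal{I})$. Since these classes are the same whether computed from $\mathcal{A}$ or $\mathcal{B}$, Lemma \ref{weaker lemma} applied to $\mathcal{B}_\mathcal{I}$ yields $\tilde{\varphi}^{\mathcal{B}_\mathcal{I}} > (\ell-1)/2^n$. By symmetry, $|\tilde{\varphi}^{\mathcal{A}_\mathcal{I}} - \tilde{\varphi}^{\mathcal{B}_\mathcal{I}}| \leq 2 \cdot 2^{-n}$, and the uniform approximation between $\varphi$ and $\tilde{\varphi}$ upgrades this to $|\varphi^{\mathcal{A}_\mathcal{I}} - \varphi^{\mathcal{B}_\mathcal{I}}| \leq 2^{-n} + 2 \cdot 2^{-n-1} \cdot 1$ (some small multiple of $2^{-n}$). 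Letting $n \to \infty$ gives $\varphi^{\mathcal{A}_\mathcal{I}} = \varphi^{\mathcal{B}_\mathcal{I}}$, so $\mathcal{A}_\mathcal{I} \equiv \mathcal{B}_\mathcal{I}$. There is no genuine obstacle beyond the bookkeeping: all the work has already been done in Theorem \ref{FV} and Lemma \ref{weaker lemma}, and the pivotal point is simply that elementary equivalence of the fibres makes the ``index sets'' $X^j_i$ match on the nose because $\psi_j$ is a sentence.
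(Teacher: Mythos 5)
Your proof is correct and follows essentially the same route as the paper's: reduce everything to the reduced-product case, observe that the auxiliary formulas $\psi_j$ are sentences so the index sets $X^j_i$ (and $\tilde X^j_i$) computed from $\{\mathcal A_\gamma\}$ and $\{\mathcal B_\gamma\}$ coincide, and then combine Definition \ref{det}(3) with Lemma \ref{weaker lemma} to get a bound of order $2^{-n}$ and let $n\to\infty$. The only differences are bookkeeping: you make explicit the free-variable claim for the $\psi_j$ and the passage through the $\mathcal F_0$-restricted approximant $\tilde\varphi$, both of which the paper leaves implicit.
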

\begin{proof}
We only need to show this for reduced products, since the others are special cases of reduced products. Let $ \mathcal{A}_{\mathcal{I}}$ and $ \mathcal{B}_{\mathcal{I}}$ be two reduced products over ideal $\mathcal{I}$ such that $\mathcal{A}_{\gamma}\equiv \mathcal{B}_{\gamma}$ for every $\gamma\in \Omega$. Let $\varphi$ be an $\mathcal{L}$-sentence. By Proposition \ref{F-restricted} we can assume $\varphi$ is an $\mathcal{F}_{0}$-restricted $\mathcal{L}$-sentence.  For a given $n\in \mathbb{N}$ suppose $\varphi$ is determined up to $2^{-n}$ by $( \sigma_{0},\dots,\sigma_{2^{n}}; \psi_{0}, \dots, \psi_{m-1})$. For each $i\in \{0, \dots, 2^{n}\}$ and $j\in \{0,\dots, m-1\}$ let
 \begin{equation}
 \nonumber X_{i}^{j}= \{\gamma\in \Omega : \psi_{j}^{\mathcal{A}_{\gamma}}> i/2^{n}\},
 \end{equation}
  \begin{equation}
 \nonumber Y_{i}^{j}= \{\gamma\in \Omega : \psi_{j}^{\mathcal{B}_{\gamma}}> i/2^{n}\}.
 \end{equation}
By our assumption $X_{i}^{j}=Y_{i}^{j}$ for all $i$ and $j$. Therefore
 \begin{equation}
 \nonumber P(\Omega)/\mathcal{I} \vDash \sigma_{i}([X^{0}_{0}]_{\mathcal{I}},\dots, [X^{m-1}_{2^{n}}]_{\mathcal{I}})\leftrightarrow \sigma_{i}([Y^{0}_{0}]_{\mathcal{I}},\dots, [Y^{m-1}_{2^{n}}]_{\mathcal{I}}),
 \end{equation}
 and 
 \begin{equation}
 \nonumber P(\Omega)/\mathcal{I} \vDash \sigma_{i}([\tilde{X}^{0}_{0}]_{\mathcal{I}},\dots, [\tilde{X}^{m-1}_{2^{n}}]_{\mathcal{I}})\leftrightarrow \sigma_{i}([\tilde{Y}^{0}_{0}]_{\mathcal{I}},\dots, [\tilde{Y}^{m-1}_{2^{n}}]_{\mathcal{I}}).
 \end{equation}
 If $\varphi^{\mathcal{A}_{\mathcal{I}}}> \ell/2^{n}$  by Theorem \ref{FV} and Definition \ref{det} (3) we have
  \begin{equation}
 \nonumber P(\Omega)/\mathcal{I} \vDash \sigma_{\ell}([\tilde{X}^{0}_{0}]_{\mathcal{I}},\dots, [\tilde{X}^{m-1}_{2^{n}}]_{\mathcal{I}})
  \end{equation}
 which is of course
   \begin{equation}
 \nonumber P(\Omega)/\mathcal{I} \vDash  \sigma_{\ell}([\tilde{Y}^{0}_{0}]_{\mathcal{I}},\dots, [\tilde{Y}^{m-1}_{2^{n}}]_{\mathcal{I}}).
 \end{equation}
 Thus Lemma \ref{weaker lemma} implies that
 $ \varphi^{\mathcal{B}_{\mathcal{I}}}>( \ell-1)/2^{n}$. Similarly $ \varphi^{\mathcal{B}_{\mathcal{I}}}> \ell/2^{n}$ implies that $ \varphi^{\mathcal{A}_{\mathcal{I}}}> (\ell-1)/2^{n}$.
This means that $|\varphi^{\mathcal{A}_{\mathcal{I}}} - \varphi^{\mathcal{B}_{\mathcal{I}}}|\leq 1/2^n$.
  Since $n$ was arbitrary by approaching $n$ to infinity, we have $\varphi^{\mathcal{A}_{\mathcal{I}}}=\varphi^{\mathcal{B}_{\mathcal{I}}}$.
Therefore
  $\mathcal{A}_{\mathcal{I}}\equiv \mathcal{B}_{\mathcal{I}}$.
\end{proof}

\begin{theorem}\label{atomless}
Assume $\mathcal{A}$ is an $\mathcal{L}$-structure and $\mathcal{I}$ and $\mathcal{J}$ are atomless ideals on $\Omega$, then the reduced powers of $\mathcal{A}$ over $\mathcal{I}$ and $\mathcal{J}$ are elementarily equivalent.
\end{theorem}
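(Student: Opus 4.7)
The plan is to combine the metric Feferman-Vaught theorem with the classical fact (due to Tarski) that the theory of atomless Boolean algebras is complete, essentially adapting the proof of Proposition \ref{preservation} to the situation where the underlying structures are identical but the ideals differ.

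First I would fix an $\mathcal{L}$-sentence $\varphi$ (WLOG $\mathcal{F}_0$-restricted by Proposition \ref{F-restricted}) and, given $n$, invoke Theorem \ref{FV} to obtain Boolean-algebra formulas $(\sigma_0,\dots,\sigma_{2^n};\psi_0,\dots,\psi_{m-1})$ determining $\varphi$ up to $2^{-n}$. The crucial observation is that since $\varphi$ is a sentence, the inductive construction in the proof of Theorem \ref{FV} produces $\psi_j$'s that are also sentences (no free variables are ever introduced, only bound ones). Consequently, for the constant family $\mathcal{A}_\gamma = \mathcal{A}$, the sets
\begin{equation}
\nonumber X_i^j=\{\gamma\in\Omega: \psi_j^{\mathcal A}>i/2^n\},\qquad \tilde X_i^j=\{\gamma\in\Omega: \psi_j^{\mathcal A}\geq i/2^n\}
\end{equation}
are each either $\Omega$ or $\emptyset$, depending only on $\mathcal{A}$ and $\psi_j$, and hence their classes in $P(\Omega)/\mathcal{I}$ are the Boolean constants $1$ or $0$, with the same pattern of $0$s and $1$s as their classes in $P(\Omega)/\mathcal{J}$.

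Next I would appeal to Tarski's classical theorem that the theory of atomless Boolean algebras is complete (equivalently, that any two atomless Boolean algebras are elementarily equivalent). Since $P(\Omega)/\mathcal{I}$ and $P(\Omega)/\mathcal{J}$ are atomless by hypothesis, every Boolean sentence (in particular $\sigma_\ell$ evaluated at a fixed pattern of $0$s and $1$s, which becomes a closed formula in the language with constants $0,1$) holds in one iff it holds in the other. Therefore
\begin{equation}
\nonumber P(\Omega)/\mathcal{I}\vDash \sigma_\ell([\tilde X_0^0]_{\mathcal{I}},\dots,[\tilde X_{2^n}^{m-1}]_{\mathcal{I}})\;\Longleftrightarrow\; P(\Omega)/\mathcal{J}\vDash \sigma_\ell([\tilde X_0^0]_{\mathcal{J}},\dots,[\tilde X_{2^n}^{m-1}]_{\mathcal{J}}),
\end{equation}
and analogously with $X_i^j$ in place of $\tilde X_i^j$.

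Finally I would combine these equivalences with Theorem \ref{FV} and Lemma \ref{weaker lemma} exactly as in Proposition \ref{preservation}: if $\varphi^{\mathcal{A}_{\mathcal{I}}}>\ell/2^n$ then by Theorem \ref{FV}(3) the formula $\sigma_\ell$ holds in $P(\Omega)/\mathcal{I}$ on the $\tilde X_i^j$ classes, hence also in $P(\Omega)/\mathcal{J}$ by the preceding paragraph, and Lemma \ref{weaker lemma} then yields $\varphi^{\mathcal{A}_{\mathcal{J}}}>(\ell-1)/2^n$. By symmetry $|\varphi^{\mathcal{A}_{\mathcal{I}}}-\varphi^{\mathcal{A}_{\mathcal{J}}}|\leq 1/2^n$, and letting $n\to\infty$ gives equality, so $\mathcal A_{\mathcal I}\equiv \mathcal A_{\mathcal J}$.

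The only real obstacle is verifying that when $\varphi$ is a sentence the inductive construction in Theorem \ref{FV} keeps all the auxiliary $\psi_j$'s sentences, so that the sets $X_i^j$ depend on neither $\gamma$ nor the choice of ideal. Once this (easy but essential) inspection is carried out, the argument reduces to Tarski's completeness theorem for atomless Boolean algebras plugged into the machinery already developed.
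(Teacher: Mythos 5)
Your proposal is correct and follows essentially the same route as the paper: reduce to an $\mathcal{F}_0$-restricted sentence determined up to $2^{-n}$, observe that for a constant family the sets $X_i^j$ are $\Omega$ or $\emptyset$ so their classes are Boolean constants, transfer $\sigma_\ell$ between $P(\Omega)/\mathcal{I}$ and $P(\Omega)/\mathcal{J}$ via elementary equivalence of atomless Boolean algebras, and close the $1/2^n$ gap as in Proposition \ref{preservation}. Your explicit remark that one must check the auxiliary $\psi_j$'s remain sentences is a detail the paper leaves implicit, but it does not change the argument.
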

\begin{proof}

Let $\mathcal{A}_{\mathcal{I}}$ and $\mathcal{A}_{\mathcal{J}}$ denote the reduced powers of $\mathcal{A}$ ($\mathcal{A}_{\gamma}=\mathcal{A}$ for all $\gamma\in \Omega$) over $\mathcal{I}$ and $\mathcal{J}$, respectively. Let $\varphi$ be an $\mathcal{L}$-sentence and for $n\geq 1$ find an $\mathcal{F}_{0}$-restricted $\mathcal{L}$-sentence $\tilde{\varphi}$ which is uniformly within $2^{-n}$ of $\varphi$ and is determined up to $2^{-n}$ by $( \sigma_{0},\dots,\sigma_{2^{n}}; \psi_{0}, \dots, \psi_{m-1})$. Then
\begin{equation}
\nonumber X^{j}_{i}=\{\gamma\in\Omega: \psi_{j}^{\mathcal{A}_{\gamma}}> i/2^{n}\}
\end{equation}
is clearly either $\Omega$ or $\emptyset$, therefore $[X^{j}_{i}]_{\mathcal{I}}=[X^{j}_{i}]_{\mathcal{J}}= 0$ or $1$. Since any two atomless Boolean algebras are elementarily equivalent, for every $i=0,\dots , 2^{n}$
\begin{equation}
\nonumber  P(\Omega)/\mathcal{I} \vDash\sigma_{i}([X^{0}_{0}]_{\mathcal{I}}, \dots, [X^{m-1}_{2^{n}}]_{\mathcal{I}}) \Longleftrightarrow P(\Omega)/\mathcal{J} \vDash\sigma_{i}([X^{0}_{0}]_{\mathcal{J}}, \dots, [X^{m-1}_{2^{n}}]_{\mathcal{J}}).
\end{equation}
Thus by Theorem \ref{FV} and the same argument as the proof of Proposition \ref{preservation} we have $\varphi^{\mathcal{A}_{\mathcal{I}}}=\varphi^{\mathcal{B}_{\mathcal{I}}}$.
\end{proof}

\section{Isomorphisms of reduced products under the Continuum Hypothesis}\label{Last}

  In C*-algebra context an important class of corona algebras is the reduced power of a C*-algebra $\mathcal{A}$ over the Fr\'{e}chet ideal $Fin$. It is called the \emph{asymptotic sequence algebra} of $\mathcal{A}$ and denoted by $\ell_{\infty}(\mathcal{A})/c_{0}(\mathcal{A})$. The C*-algebra $\mathcal{A}$ can be identified with its diagonal image in $\ell_{\infty}(\mathcal{A})/c_{0}(\mathcal{A})$. We will also use the same notation $\ell_{\infty}(\mathcal{A})/c_{0}(\mathcal{A})$ for the reduced power of an arbitrary metric structure $\mathcal{A}$ over $Fin$.

  As mentioned in the introduction a result of Farah-Shelah (Theorem \ref{countable saturation}) shows that asymptotic sequence algebras are countably saturated and therefore if $\mathcal{A}$ is separable, assuming the Continuum Hypothesis, they have $2^{\aleph_{1}}$ automorphisms, hence non-trivial ones. The last statement follows from a folklore theorem (refer to \cite[Corollary 4.1]{FaShRig} for a proof due to Bradd Hart) which states that any $\kappa$-saturated metric structure of density character $\kappa$ has $2^{\kappa}$ automorphisms. Furthermore, $\kappa$-saturated structures of the same density character $ \kappa$ which are elementarily equivalent are isomorphic. Assuming  the Continuum Hypothesis,  for a separable $\mathcal{A}$ and a nonprincipal ultrafilter $\mathcal U$ on $\mathbb{N}$, the asymptotic sequence algebra $\ell_{\infty}(\mathcal{A})/c_{0}(\mathcal{A})$  and its ultrapower $\big(\ell_{\infty}(\mathcal{A})/c_{0}(\mathcal{A})\big)_\mathcal U$ have the same density character $\aleph_1$ (see \cite[Theorem 2.7]{multiplier}), and therefore they are isomorphic. We will show that this is also the case for the reduced powers of separable metric structures over a large family of ideals (Corollary \ref{coro}), e.g., $\ell_{\infty}(\mathcal{A})/c_{0}(\mathcal{A})$ is isomorphic to the asymptotic sequence algebra of $\ell_{\infty}(\mathcal{A})/c_{0}(\mathcal{A})$, under the Continuum Hypothesis.

We briefly recall a few definitions and facts about ideals on $\mathbb{N}$. The properties of various \emph{definable} ($F_{\sigma}$, $F_{\delta\sigma}$, Borel, analytic, $\dots$) ideals and their Boolean algebra quotients have been vastly studied, see e.g., \cite{FaAn}, \cite{Sol}.

For any $A\subseteq \mathbb{N}\times \mathbb{N}$ the \emph{vertical section} of $A$ at $m$ is the set $A_{m}=\{n\in \mathbb{N}: (m,n)\in A\}$.
  For two ideals $\mathcal{I}$ and $\mathcal{J}$ on $\mathbb{N}$, the \emph{Fubini product}, $\mathcal{I}\times \mathcal{J}$, of $\mathcal{I}$ and $\mathcal{J}$ is the ideal on $\mathbb{N}\times \mathbb{N}$ defined by
\begin{equation}
\nonumber A\in \mathcal{I}\times\mathcal{J} ~~\leftrightarrow~~ \{i: A_{i}\notin \mathcal{J}\}\in \mathcal{I}.
\end{equation}

\begin{definition}
A map $\mu : \mathcal{P}(\mathbb{N})\rightarrow [0,\infty]$ is a submeasure supported by $\mathbb{N}$ if for $A, B\subseteq \mathbb{N}$
\begin{eqnarray}
 \nonumber &\mu(\emptyset)=0\\
 \nonumber &\mu(A)\leq \mu(A\cup B)\leq\mu(A)+\mu(B).
\end{eqnarray}
It is lower semicontinuous if for all $A\subseteq \mathbb{N}$ we have
\begin{equation}
\nonumber \mu(A)= \lim_{n\rightarrow\infty}\mu(A\cap [1,n]).
\end{equation}
\end{definition}

\textbf{Layered ideals.} An ideal $\mathcal{I}$ is \emph{layered} if there is $f: P(\mathbb{N})\rightarrow [0, \infty]$ such that
\begin{enumerate}
\item $f(A)\leq f(B)$ if $A\subseteq B$,
\item $\mathcal{I}=\{A: ~ f(A)<\infty\}$,
\item $f(A)=\infty$ implies $f(A)=\sup_{B\subseteq A} f(B)$.
\end{enumerate}
Layered ideals were introduced in \cite{How}, where in particular the following is proved.
\begin{lemma}\label{layered}\cite[Proposition 6.6]{How}
\begin{enumerate}
\item Every $F_{\sigma}$ ideal is layered.
\item If $\mathcal{J}$ is a layered ideal and $\mathcal{I}$ is an arbitrary ideal on $\mathbb{N}$, then $\mathcal{J}\times\mathcal{I}$ is
layered.
\end{enumerate}
\end{lemma}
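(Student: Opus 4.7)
The plan is to prove (1) via Mazur's theorem characterizing $F_\sigma$ ideals, and (2) via an explicit construction that lifts a witness for $\mathcal J$ to one for $\mathcal J\times\mathcal I$ through vertical sections.

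For (1), recall Mazur's theorem: an ideal $\mathcal I$ on $\mathbb N$ is $F_\sigma$ if and only if there is a lower semicontinuous submeasure $\mu$ on $\mathbb N$ with $\mathcal I=\{A:\mu(A)<\infty\}$. I would simply take $f:=\mu$ and verify the three clauses. Monotonicity is immediate from the submeasure axiom $\mu(A)\le\mu(A\cup B)$, which for $A\subseteq B$ gives $\mu(A)\le\mu(B)$. Clause (2) of layeredness is exactly Mazur's characterization. Clause (3) is where lower semicontinuity enters: if $f(A)=\infty$ then $\mu(A\cap[1,n])\to\infty$ by lower semicontinuity, and since each initial segment $A\cap[1,n]$ is a subset of $A$, we obtain $\sup_{B\subseteq A} f(B)=\infty=f(A)$.

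For (2), let $g$ witness that $\mathcal J$ is layered, and define
\[
f(A)\;:=\;g\bigl(\{i\in\mathbb N:\, A_i\notin\mathcal I\}\bigr)\qquad (A\subseteq \mathbb N\times\mathbb N),
\]
where $A_i$ is the $i$-th vertical section of $A$. Monotonicity of $f$ reduces, via hereditariness of $\mathcal I$, to the observation that $A\subseteq B$ gives $\{i:A_i\notin\mathcal I\}\subseteq\{i:B_i\notin\mathcal I\}$, and then to monotonicity of $g$. Clause (2) for $f$ is by definition of the Fubini product $\mathcal J\times\mathcal I$ together with clause (2) for $g$. For clause (3), if $f(A)=\infty$ set $S:=\{i:A_i\notin\mathcal I\}$, so $g(S)=\infty$; layeredness of $\mathcal J$ then yields $T_n\subseteq S$ with $g(T_n)\to\infty$, and the vertical restrictions $B_n:=A\cap (T_n\times\mathbb N)\subseteq A$ satisfy $(B_n)_i=A_i$ for $i\in T_n$ and $(B_n)_i=\emptyset$ otherwise, so $\{i:(B_n)_i\notin\mathcal I\}=T_n$ and hence $f(B_n)=g(T_n)\to\infty$.

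Neither step is genuinely delicate: part (1) is essentially a translation of Mazur's lower semicontinuous submeasures into the layered vocabulary, and part (2) is a straightforward transport of witnesses through vertical sections. The only point requiring mild attention is the bookkeeping of vertical sections in clause (3) of part (2), specifically making sure that the ``all-empty-or-equal-to-$A_i$'' structure of $B_n$ correctly identifies $\{i:(B_n)_i\notin\mathcal I\}$ with $T_n$, which uses both that $\mathcal I$ is a proper ideal (so $\emptyset\in\mathcal I$) and that $T_n\subseteq S$.
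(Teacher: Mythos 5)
Your proposal is correct and follows the same route as the paper: part (1) is Mazur's theorem with $f=\mu$, and part (2) uses exactly the witness $f(A)=g(\{i: A_i\notin\mathcal I\})$ built from vertical sections. The paper leaves the verification of clauses (1)--(3) as ``not hard to see,'' whereas you supply the details (initial segments for lower semicontinuity in (1), the sets $B_n=A\cap(T_n\times\mathbb N)$ in (2)), and these details are accurate.
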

\begin{proof}
 By a theorem of K. Mazur (\cite{Mazur}) for every $F_{\sigma}$ ideal $\mathcal{I}$ there is a lower semicontinuous submeasure $\mu$ such that
 \begin{eqnarray}
\nonumber \mathcal{I}=\{A\subseteq \mathbb{N}~:~ \mu(A)< \infty\}
\end{eqnarray}
and $f=\mu$ satisfies all the conditions above. For (2) let $f_{\mathcal{J}}$ be a map witnessing that $\mathcal{J}$ is layered, and define $f$ by
 \begin{equation}
 \nonumber f(A)= f_{\mathcal{J}}(\{n: A_{n}\notin \mathcal{I}\})
 \end{equation}
 for $A\subseteq \mathbb{N}^{2}$. It is not hard to see that $f$ satisfies the conditions (1) - (3) stated above.
 \end{proof}

\begin{theorem}[Farah-Shelah]\label{countable saturation}
Every reduced product $\prod_{n}\mathcal{A}_{n}/ \bigoplus_{\mathcal{I}}\mathcal{A}_{n}$ is countably saturated if
 $\mathcal{I}$ is a layered ideal.
\end{theorem}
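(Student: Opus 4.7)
The plan is to realize a countable type in $\mathcal{A}_\mathcal{I} = \prod_n \mathcal{A}_n / \bigoplus_\mathcal{I} \mathcal{A}_n$ by a diagonal construction at the index level, using Theorem \ref{FV} to translate satisfaction in the reduced product into coordinate-wise and Boolean-algebra-level conditions, and exploiting the layered witness $f$ of $\mathcal{I}$ to carry out the diagonalization. Throughout, let $t(x) = \{\varphi_k(\bar a, x) \leq r_k : k \in \mathbb{N}\}$ be a countable type, approximately realized by every finite subfamily in $\mathcal{A}_\mathcal{I}$; by Proposition~\ref{F-restricted} we may assume each $\varphi_k$ is $\mathcal{F}_0$-restricted.

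The key step is to produce, for every $n$, a lift $c_n \in \prod_m \mathcal{A}_m$ together with a set $S_n \in \mathcal{I}$ such that for every $\gamma \in \mathbb{N} \setminus S_n$ and every $k \leq n$,
\[
\varphi_k(\bar a(\gamma), c_n(\gamma))^{\mathcal{A}_\gamma} \leq r_k + 2^{-n+2}.
\]
From the witness $c_n^{\ast} \in \mathcal{A}_\mathcal{I}$ supplied by finite consistency, one applies Theorem~\ref{FV} to the $\mathcal{F}_0$-restricted formula $\chi_n(z) = \max_{k \leq n} \bigl(\varphi_k(\bar a, z) \dotminus r_k\bigr)$, obtaining determining data $(\sigma_0,\ldots,\sigma_{2^n}; \psi_0,\ldots,\psi_{m_n - 1})$. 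Unfolding the Boolean algebra condition that the corresponding $\sigma_\ell$ fail for $\ell$ above the relevant threshold, and propagating choices of Boolean witnesses down through the inductive FV-construction (using Remark~\ref{remark-FV}), one extracts a single set $S_n \in \mathcal{I}$ outside of which all coordinate-wise bounds for $k \leq n$ hold simultaneously. After replacing $S_n$ by $\bigcup_{m \leq n} S_m \cup [0,n)$ (using $Fin \subseteq \mathcal{I}$), we may assume $S_n$ is increasing and $\bigcup_n S_n = \mathbb{N}$. Set $n(\gamma) = \max\{n : \gamma \notin S_n\}$ and $c(\gamma) = c_{n(\gamma)}(\gamma)$. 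For fixed $k$ and $\epsilon > 0$, choosing $N \geq k$ with $2^{-N+2} < \epsilon$ yields $\{\gamma : \varphi_k(\bar a(\gamma), c(\gamma))^{\mathcal{A}_\gamma} > r_k + \epsilon\} \subseteq S_N \in \mathcal{I}$, and a final application of Lemma~\ref{weaker lemma} upgrades these coordinate-wise bounds to $\varphi_k(\bar a, c)^{\mathcal{A}_\mathcal{I}} \leq r_k$, so $\pi_\mathcal{I}(c)$ realizes $t(x)$.

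The main obstacle is this lift-and-cover step: because $\varphi_k$ need not be atomic, its interpretation in $\mathcal{A}_\mathcal{I}$ is not a $\limsup$ of coordinate interpretations, and one must recurse through the full FV-representation. At the inner levels -- particularly the $\sup$-subformulas -- one needs to extract concrete subsets of $\mathbb{N}$ realizing existential Boolean-algebra conditions in $P(\mathbb{N})/\mathcal{I}$; this is precisely where the layered structure enters, since condition (3) in the definition of layered guarantees $f$-large subsets inside any $\mathcal{I}$-positive set, enabling the inductive extraction of such witnesses in a form controlled enough to be assembled into a single $S_n$. This controlled Boolean extraction is the technical heart of the Farah-Shelah argument, and without the layered hypothesis one cannot in general collapse the FV-Boolean satisfaction into a single concrete small set, which is why countable saturation can fail over more pathological ideals.
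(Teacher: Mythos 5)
First, a remark on the comparison you asked for: the paper does not actually prove this theorem --- its ``proof'' is the citation \cite[Theorem 2.7]{FaShRig}, and the Farah--Shelah argument there is a direct diagonalization driven by the layering function $f$, not routed through any Feferman--Vaught machinery (which postdates it and is the contribution of the present paper). So your attempt is necessarily a different route; unfortunately it contains a genuine gap.

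The gap is your ``key step.'' You claim that from a witness $c_n^{*}$ making $\chi_n(\pi_{_{\mathcal I}}(\bar a),c_n^{*})^{\mathcal A_{\mathcal I}}$ small one can extract, by ``unfolding'' Theorem \ref{FV}, a lift $c_n$ and a set $S_n\in\mathcal I$ with $\varphi_k(\bar a(\gamma),c_n(\gamma))^{\mathcal A_\gamma}\leq r_k+2^{-n+2}$ for all $\gamma\notin S_n$ and $k\leq n$. This is false, and Theorem \ref{FV} does not yield it. The value of a non-atomic formula in $\mathcal A_{\mathcal I}$ is not the $\limsup_{\mathcal I}$ of its coordinatewise values (that is Lemma \ref{atomic}, for atomic formulas only); already the connective $\dotminus$ destroys this. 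Concretely, if $\alpha,\beta$ are atomic and $\alpha(\bar a(\gamma)),\beta(\bar a(\gamma))$ oscillate out of phase between $0$ and $1$ on two $\mathcal I$-positive sets, then $(\alpha\dotminus\beta)(\pi_{_{\mathcal I}}(\bar a))=1\dotminus 1=0$ while $(\alpha\dotminus\beta)(\bar a(\gamma))=1$ on an $\mathcal I$-positive set; including the condition $(\alpha\dotminus\beta)(\bar a)\leq 0$ in the type makes your required $S_n$ nonexistent for large $n$, for any choice of $c_n$. What Theorem \ref{FV} actually provides is a monotone Boolean condition on the level sets of the \emph{auxiliary} formulas $\psi_j$ of Definition \ref{det}, which are in general quite different from $\varphi_k$ (in the $\sup$ case they are the formulas $\theta_k$); it never produces coordinatewise bounds on $\varphi_k$ itself. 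Symmetrically, your closing step misapplies Lemma \ref{weaker lemma}: that lemma takes as input the Boolean condition $\sigma_\ell$ on the level sets of the $\psi_j$ and outputs a \emph{lower} bound on $\varphi$; it cannot ``upgrade'' coordinatewise upper bounds on $\varphi_k$ into an upper bound on $\varphi_k(\pi_{_{\mathcal I}}(\bar a),\pi_{_{\mathcal I}}(c))$. A further symptom that something is wrong: once the key step is granted, your gluing uses only $Fin\subseteq\mathcal I$ (the $S_n$ need only lie in $\mathcal I$ and cover $\mathbb N$), so the argument as written would prove countable saturation for \emph{every} ideal containing $Fin$, which is false. Layeredness must enter the construction of the witness itself --- in Farah--Shelah it drives a diagonalization across sets of increasing finite $f$-value --- and your appeal to condition (3) of layeredness inside the ``Boolean extraction'' does not engage with this: extracting representatives of finitely many Boolean conditions modulo $\mathcal I$, as in the $\sup$ case of the proof of Theorem \ref{FV}, works over any ideal.
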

\begin{proof}
See \cite[Theorem 2.7]{FaShRig}.
\end{proof}
Therefore an immediate consequence of Theorem \ref{ee} and Theorem \ref{countable saturation} implies the following corollary.
\begin{corollary}\label{CH}
Assume the Continuum Hypothesis. If $\mathcal{A}$ is a separable metric structure, $\mathcal{I}$ and $\mathcal{J}$ are atomless layered ideals, then the reduced powers $\mathcal{A}_{\mathcal{I}}$ and $\mathcal{A}_{\mathcal{J}}$ are isomorphic.
\end{corollary}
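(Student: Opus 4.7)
The plan is to combine three ingredients already established in the paper: Theorem~\ref{ee} gives $\mathcal{A}_{\mathcal{I}} \equiv \mathcal{A}_{\mathcal{J}}$; Theorem~\ref{countable saturation} gives that both $\mathcal{A}_{\mathcal{I}}$ and $\mathcal{A}_{\mathcal{J}}$ are $\aleph_1$-saturated; and the back-and-forth theorem recalled in the introduction asserts that any two elementarily equivalent $\kappa$-saturated metric structures in the same countable language of density character $\kappa$ are isomorphic. Thus the only content to be verified is that $\chi(\mathcal{A}_{\mathcal{I}}) = \chi(\mathcal{A}_{\mathcal{J}}) = \aleph_1$ under CH.

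For the upper bound, separability of $\mathcal{A}$ provides a countable dense $D \subseteq \mathcal{A}$; the image of $D^{\mathbb{N}}$ under $\pi_{\mathcal{I}}$ is dense in $\mathcal{A}_{\mathcal{I}}$ and has cardinality at most $\aleph_0^{\aleph_0} = 2^{\aleph_0} = \aleph_1$, so $\chi(\mathcal{A}_{\mathcal{I}}) \leq \aleph_1$, and the same applies to $\mathcal{A}_{\mathcal{J}}$. For the lower bound, assume $\mathcal{A}$ is non-trivial (otherwise the conclusion is vacuous) and fix $a,b \in \mathcal{A}$ with $r = d(a,b) > 0$. The idea is to use atomlessness of $\mathcal{I}$ to produce a family of $2^{\aleph_0} = \aleph_1$ pairwise $r$-separated elements of $\mathcal{A}_{\mathcal{I}}$. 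Iterating the splitting property of an atomless quotient Boolean algebra yields pairwise disjoint $\mathcal{I}$-positive sets $\{B_n : n<\omega\}$, and a further split of each $B_n$ into disjoint $\mathcal{I}$-positive pieces $B_n^0, B_n^1$. For each $f \in 2^{\omega}$ set $A_f = \bigcup_n B_n^{f(n)}$; whenever $f(n_0) \neq g(n_0)$, the symmetric difference $A_f \triangle A_g$ contains $B_{n_0}^0 \cup B_{n_0}^1$ and is hence $\mathcal{I}$-positive. Defining $x_f(k) = a$ for $k \in A_f$ and $x_f(k) = b$ otherwise, the formula for $d_{\mathcal{I}}$ gives $d_{\mathcal{I}}(\pi_{\mathcal{I}}(x_f), \pi_{\mathcal{I}}(x_g)) = r$ for all $f \neq g$, producing an $r$-discrete family in $\mathcal{A}_{\mathcal{I}}$ of size $\aleph_1$, so $\chi(\mathcal{A}_{\mathcal{I}}) \geq \aleph_1$.

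Once both density characters are known to equal $\aleph_1$, invoking the back-and-forth theorem concludes $\mathcal{A}_{\mathcal{I}} \cong \mathcal{A}_{\mathcal{J}}$. Since Theorems~\ref{ee} and~\ref{countable saturation} have already done the heavy lifting, there is no serious obstacle here; the only non-routine step is the splitting-tree construction behind the non-separability argument, and even that reduces to iterating the defining property of an atomless quotient Boolean algebra $\omega$ many times.
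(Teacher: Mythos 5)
Your argument is exactly the paper's intended route: the paper derives this corollary as an ``immediate consequence'' of Theorem~\ref{ee} (elementary equivalence, using atomlessness) and Theorem~\ref{countable saturation} (countable saturation, using layeredness), combined with the back-and-forth fact that elementarily equivalent $\aleph_1$-saturated structures of density character $\aleph_1$ are isomorphic. Your additional verification that both reduced powers have density character exactly $\aleph_1$ under CH (upper bound from separability of $\mathcal{A}$, lower bound from the splitting-tree construction in the atomless quotient) is correct and simply fills in a detail the paper leaves implicit.
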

In corollary \ref{coro} we give an application of this result, but before we need the following lemma.
\begin{lemma}\label{Fubini}
Suppose $\mathcal{I}$ and $\mathcal{J}$ are ideals on $\mathbb{N}$ and $\mathcal{A}_{\mathcal{I}}$ is the reduced power of $\mathcal{A}$ over the ideal $\mathcal{I}$. Then
\begin{equation}
\nonumber \frac{\prod\mathcal{A}_{\mathcal{I}}}{\bigoplus_{\mathcal{J}}\mathcal{A}_{\mathcal{I}}}\cong \frac{\prod_{\mathbb{N}^{2}}\mathcal{A}}{\bigoplus_{\mathcal{J}\times \mathcal{I}}\mathcal{A}}.
\end{equation}
\end{lemma}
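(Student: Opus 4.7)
\textbf{Proof plan for Lemma \ref{Fubini}.} The plan is to construct the obvious ``reshuffling'' isomorphism and verify the four routine items (well-defined, structure-preserving, surjective, injective). Given $\bar a=(a_{j,i})_{(j,i)\in\mathbb N^2}\in\prod_{\mathbb N^2}\mathcal A$, for each $j\in\mathbb N$ let $b_j\in\mathcal A_\mathcal I$ be the class of the column $(a_{j,i})_{i\in\mathbb N}$ under $\pi_\mathcal I$. Since each column is bounded by $\sup_{(j,i)}\|a_{j,i}\|$, the sequence $(b_j)_j$ lives in $\prod_j\mathcal A_\mathcal I$. Define
\begin{equation}
\nonumber
\Phi:\frac{\prod_{\mathbb N^2}\mathcal A}{\bigoplus_{\mathcal J\times\mathcal I}\mathcal A}\longrightarrow \frac{\prod_j \mathcal A_{\mathcal I}}{\bigoplus_{\mathcal J}\mathcal A_{\mathcal I}},\qquad [\bar a]\longmapsto \bigl[(b_j)_j\bigr].
\end{equation}

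The central computational fact, used for every one of the four items, is the equivalence, valid for any bounded $(r_i)\subseteq[0,\infty)$,
\begin{equation}
\nonumber
\limsup_{\mathcal I} r_i\geq\epsilon\ \Longleftrightarrow\ \{i:r_i\geq\epsilon-\delta\}\notin\mathcal I\ \text{for every }\delta>0,
\end{equation}
and more crudely $\{i:r_i\geq\epsilon\}\in\mathcal I\Rightarrow \limsup_\mathcal I r_i\leq\epsilon$, together with the monotone direction $\limsup_\mathcal I r_i\geq\epsilon\Rightarrow\{i:r_i\geq\epsilon/2\}\notin\mathcal I$. Combining this with the definition $A\in\mathcal J\times\mathcal I\iff\{j:A_j\notin\mathcal I\}\in\mathcal J$, one gets the two inclusions
\begin{equation}
\nonumber
\{j:\|b_j\|\geq\epsilon\}\ \subseteq\ \bigl\{j:\{i:d(a_{j,i},0)\geq\tfrac{\epsilon}{2}\}\notin\mathcal I\bigr\}\ \subseteq\ \{j:\|b_j\|\geq\tfrac{\epsilon}{2}\},
\end{equation}
applied to differences $a-a'$. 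Immediately this yields well-definedness (if $[\bar a]=[\bar a']$ then $\|b_j-b'_j\|$ is small on a $\mathcal J$-large set), injectivity (if $(b_j)$ represents $0$, the middle set is in $\mathcal J$ for every $\epsilon$, so $\bar a$ is negligible over $\mathcal J\times\mathcal I$), and the isometric character of $\Phi$ (the $d_\mathcal J$-distance and the $d_{\mathcal J\times\mathcal I}$-distance coincide on classes).

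Surjectivity is by lifting: given $(b_j)_j\in\prod_j\mathcal A_\mathcal I$, pick for each $j$ an arbitrary representative $(a_{j,i})_{i}\in\prod_i\mathcal A$ (bounded uniformly in $j$ after truncating if the language is one-sorted bounded; in general one uses the fixed diameter $K$ or domains of quantification exactly as in the treatment of reduced products). Then $\bar a\in\prod_{\mathbb N^2}\mathcal A$ and $\Phi([\bar a])=[(b_j)_j]$ by construction. For preservation of the $\mathcal L$-structure, note that constants and function symbols commute with the quotient map coordinatewise by the very definition of the reduced product; for predicates one invokes the atomic evaluation formula of Lemma~\ref{atomic} iteratively, obtaining $R([\bar a])=\limsup_{\mathcal J\times\mathcal I}R(\bar a(j,i))=\limsup_\mathcal J\limsup_\mathcal I R(\bar a(j,i))$, where the iterated $\limsup$ identity is again an elementary consequence of the definition of Fubini product and the $\limsup_\mathcal I$ characterization above.

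The main obstacle is really just the bookkeeping around the iterated-$\limsup$ identity $\limsup_{\mathcal J\times\mathcal I}r_{j,i}=\limsup_\mathcal J\limsup_\mathcal I r_{j,i}$, which must be verified with inequalities in both directions using the definition of $\mathcal J\times\mathcal I$; once this is in hand the rest is automatic and $\Phi$ is an isometric $\mathcal L$-structure isomorphism.
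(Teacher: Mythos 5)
Your map $\Phi$ is exactly the paper's isomorphism $\rho$ (same direction, same column-wise definition $b_j=\pi_{\mathcal I}(\langle a_{j,i}\rangle_i)$), and your $\varepsilon$-inclusions relating $\{j:\|b_j\|\geq\epsilon\}$ to the sections of $\{(j,i):d(a_{j,i},0)\geq\epsilon/2\}$ are the same computations the paper carries out for well-definedness and injectivity; the iterated-$\limsup$ identity you isolate is correct and is what the paper uses implicitly, while surjectivity and structure preservation are the routine steps the paper leaves to the reader. The proposal is correct and takes essentially the same approach as the paper.
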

\begin{proof}
Assume $\langle a_{n,m}\rangle$ is an element of $\prod_{\mathbb{N}^{2}}\mathcal{A}$.
Define the map $\rho: \prod_{\mathbb{N}^{2}}\mathcal{A}/\bigoplus_{\mathcal{J}\times \mathcal{I}}\mathcal{A} \rightarrow \prod(\mathcal{A}_{\mathcal{I}})/\bigoplus_{\mathcal{J}}(\mathcal{A}_{\mathcal{I}})$ by
\begin{equation}
\nonumber \rho(\pi_{_{\mathcal{J}\times \mathcal{I}}}(\langle a_{m,n}\rangle))= \pi_{_{\mathcal{J}}}(\langle b_{m}\rangle),
\end{equation}
 where $b_{m}=\pi_{_{\mathcal{I}}}(\langle a_{m,n}\rangle_{n})$ for each $m\in\mathbb{N}$.
In order to see this map is well-defined assume $\pi_{_{\mathcal{J}\times \mathcal{I}}}(\langle a_{m,n}\rangle)=0$. If $\pi_{_{\mathcal{J}}}(\langle b_{m}\rangle)\neq 0$, then there is $\epsilon>0$ such that for every $S\in \mathcal{J}$ we have
\begin{equation}
\nonumber \sup_{m\notin S}\|b_{m}\|_{\mathcal{A}_{\mathcal{I}}}\geq \epsilon.
\end{equation}
Since $\pi_{_{\mathcal{J}\times \mathcal{I}}}(\langle a_{n,m}\rangle)=0$, there is $X\in{\mathcal{J}\times \mathcal{I}}$ such that
\begin{equation}
\nonumber \sup_{(m,n)\notin X}\|\langle a_{m,n}\rangle)\|_{\mathcal{A}}< \epsilon/4.
\end{equation}
 The set $S=\{m: X_{m}\notin \mathcal{I}\}$ belongs to $\mathcal{J}$ and hence $\sup_{m\notin S}\|b_{m}\|_{\mathcal{A}_{\mathcal{I}}}\geq \epsilon$. Pick $m_{0}\notin S$ such that $\|b_{m_{0}}\|_{\mathcal{A}_{\mathcal{I}}}\geq \epsilon/2$ and then pick $n_{0}\notin X_{m_{0}}$ such that $\|a_{m_{0},n_{0}}\|_{\mathcal{A}}\geq \epsilon/4$, which is a contradiction. Therefore $\pi_{_{\mathcal{J}}}(\langle b_{m}\rangle)=0$.

To show the injectivity of $\rho$ assume $\pi_{_{\mathcal{J}}}(\langle b_{m}\rangle)=0$. Therefore for every $\epsilon >0$  there is $S\in \mathcal{J}$ such that $\|b_{m}\|_{\mathcal{A}_{\mathcal{I}}}\leq\epsilon$ for every $m\in \mathbb{N}\setminus S$. So for each $m\in \mathbb{N}\setminus S$ there is $X_{m}\in\mathcal{I}$ such that
\begin{equation}
\nonumber \sup_{n\notin X_{m}} \|a_{(m,n)}\|_{\mathcal{A}}\leq 2\epsilon.
\end{equation}
The set $X=(S\times \mathbb{N})\cup\{(m,n): n\in X_{m}\}$ belongs to the ideal $\mathcal{J}\times \mathcal{I}$ and
\begin{equation}
\nonumber \sup_{(m,n)\notin X} \|a_{m,n}\|_{\mathcal{A}}\leq 2\epsilon.
\end{equation}
Therefore $\pi_{_{\mathcal{J}\times \mathcal{I}}}(\langle a_{n,m}\rangle)=0$. It is easy to check that $\rho$ is a surjective *-homomorphism.
\end{proof}
The following corollary follows form Lemma \ref{layered} and Corollary \ref{CH}.
\begin{corollary}\label{coro}
Assume the Continuum Hypothesis. Suppose $\mathfrak{A}=\ell_{\infty}(\mathcal{A})/c_{0}(\mathcal{A})$ is the asymptotic sequence algebra of $\mathcal{A}$ and $\mathcal{I}$ is an atomless layered ideal on $\mathbb{N}$, then
\begin{equation}
\nonumber  \frac{\prod\mathfrak{A}}{\bigoplus_{\mathcal{I}} \mathfrak{A}}\cong \mathfrak{A}.
\end{equation}
\end{corollary}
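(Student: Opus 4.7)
The plan is to compose the two main ingredients already at hand: the Fubini-style identification of iterated reduced products from Lemma \ref{Fubini}, and the CH-driven uniqueness result Corollary \ref{CH}. The whole argument is a short chain of isomorphisms once the hypotheses of Corollary \ref{CH} are checked for the Fubini product $\mathcal{I}\times Fin$.

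First, I would apply Lemma \ref{Fubini} with $\mathcal{J}:=\mathcal{I}$ and with the ``inner'' ideal taken to be $Fin$, so that the inner reduced power is precisely $\mathcal{A}_{Fin}=\mathfrak{A}$. This yields
\[
\frac{\prod \mathfrak{A}}{\bigoplus_{\mathcal{I}} \mathfrak{A}} \;\cong\; \frac{\prod_{\mathbb{N}^{2}} \mathcal{A}}{\bigoplus_{\mathcal{I}\times Fin} \mathcal{A}},
\]
and reduces the problem to comparing a single reduced power of $\mathcal{A}$ (over $\mathcal{I}\times Fin$) with the asymptotic sequence algebra $\mathfrak{A}=\mathcal{A}_{Fin}$.

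Next, I would check that $\mathcal{I}\times Fin$, viewed as an ideal on the countable set $\mathbb{N}^{2}$ (identified with $\mathbb{N}$ via any bijection), is an atomless layered ideal. Layeredness is immediate from Lemma \ref{layered}(2) applied with the outer factor $\mathcal{I}$, which is layered by hypothesis. For atomlessness, given any $\mathcal{I}\times Fin$-positive $A\subseteq \mathbb{N}^{2}$, the ``vertical support'' $T:=\{i : A_{i}\notin Fin\}$ is $\mathcal{I}$-positive; since $\mathcal{I}$ is atomless, $T$ splits into two $\mathcal{I}$-positive halves $T_{0},T_{1}$, and the corresponding restrictions $A\cap(T_{0}\times\mathbb{N})$ and $A\cap(T_{1}\times\mathbb{N})$ are disjoint $\mathcal{I}\times Fin$-positive pieces of $A$.

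With both ideals $\mathcal{I}\times Fin$ and $Fin$ now known to be atomless and layered, Corollary \ref{CH} applied to the separable metric structure $\mathcal{A}$ gives
\[
\frac{\prod_{\mathbb{N}^{2}} \mathcal{A}}{\bigoplus_{\mathcal{I}\times Fin} \mathcal{A}} \;\cong\; \frac{\prod_{\mathbb{N}} \mathcal{A}}{\bigoplus_{Fin} \mathcal{A}} \;=\; \mathfrak{A},
\]
and composing with the isomorphism from Lemma \ref{Fubini} completes the proof. I do not anticipate a substantive obstacle here: the corollary is essentially bookkeeping that combines a structural identity (Fubini) with an elementary-equivalence-plus-saturation transfer (Corollary \ref{CH}). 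The only small item deserving care is the atomlessness verification in the second paragraph, and it is one line.
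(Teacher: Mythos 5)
Your argument is correct and is exactly the route the paper intends: the paper states Lemma \ref{Fubini} immediately before the corollary and then asserts that the result ``follows from Lemma \ref{layered} and Corollary \ref{CH}'', which is precisely your chain $\prod\mathfrak{A}/\bigoplus_{\mathcal{I}}\mathfrak{A}\cong\prod_{\mathbb{N}^2}\mathcal{A}/\bigoplus_{\mathcal{I}\times Fin}\mathcal{A}\cong\mathcal{A}_{Fin}=\mathfrak{A}$. Your explicit verification that $\mathcal{I}\times Fin$ is atomless (and the implicit observation that $Fin$ itself is atomless and layered) is a detail the paper leaves to the reader, and you supply it correctly.
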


\section{Non-trivially isomorphic reduced products of matrix algebras.}\label{Matrices}
 In this section we use Theorem \ref{FV} in order to prove the existence of two reduced products of matrix algebras which are isomorphic under the Continuum Hypothesis, but not isomorphic in ZFC. Note that in  model theory for operator algebras, the ranges of formulas are bounded subsets of reals possibly different from $[0,1]$ (see for example \cite{FHS}). Nevertheless Definition \ref{det} can be easily adjusted for any formula in the language of C*-algebras $\mathcal{L}$ and Theorem \ref{FV} can be proved similarly.

  As mentioned in the introduction, commutative examples of such reduced products are well-known, for example by a classical result of Parovi\v{c}enko, under the Continuum Hypothesis $(\ell_{\infty}(\mathbb{N})/c_{0}(\mathbb{N})\cong) C(\beta\omega\setminus \omega)\cong C(\beta\omega^{2}\setminus \omega^{2})$, however under the proper forcing axiom they are not isomorphic, since there are no trivial isomorphisms between them (see \cite[Chapter 4]{FaAn}). Other examples of non-trivial isomorphisms between (non-commutative) reduced products can be obtained by tensoring a matrix algebra with these commutative algebras. Recall that (\cite{Black}) for a locally compact Hausdorff topological space $X$ and for any C*-
algebra $\mathcal{A}$, $C_{0}(X,\mathcal{A})$ can be identified with $C_{0}(X)\otimes \mathcal{A}$, under the map $(f\otimes a)(x) = f(x)a$. Let $M_{n}$ denote the space of all $n\times n$ matrices over the field of complex numbers.  Assume $\mathfrak{A}=\ell_{\infty}(M_{2})/c_{0}(M_{2})$  is the asymptotic sequence algebra of $M_{2}$, we have
\begin{equation}
\nonumber \mathfrak{A}\cong C(\beta\omega\setminus \omega)\otimes M_{2}\cong M_{2}(C(\beta\omega\setminus\omega)),
\end{equation}
  and Corollary \ref{coro} implies that
 \begin{equation}
 \nonumber \frac{\ell_{\infty}(\mathfrak{A})}{c_{0}(\mathfrak{A})}\cong \frac{\prod_{\mathbb{N}^{2}} M_{2}}{\bigoplus_{Fin\times Fin} M_{2}}\equiv \mathfrak{A}.
 \end{equation}
Since $\prod_{\mathbb{N}^{2}} M_{2}/\bigoplus_{Fin\times Fin} M_{2}\cong M_{2}(C(\beta\omega^{2}\setminus \omega^{2}))$,
 for the same reason as the commutative case, under the Continuum Hypothesis $\mathfrak{A}$ and $\ell_{\infty}(\mathfrak{A})/c_{0}(\mathfrak{A})$ are non-trivially isomorphic.

Recall that an ideal $\mathcal{I}$ on $\mathbb{N}$ is a P-ideal if for every sequence $\{A_{n}\}_{n=1}^{\infty}$ of sets in $\mathcal{I}$ there is a  set $A_{\infty}\in \mathcal{I}$ such that $A_{n}\setminus A_{\infty}$ is finite, for all $n$.

\begin{theorem}\label{G}\cite[Corollary 1.1]{Ghasemi} Assume there is a measurable cardinal. 
It is relatively consistent with ZFC that for every analytic P-ideal $\mathcal{I}$ on $\mathbb{N}$, the reduced products $\prod M_{k(n)}/\bigoplus_{\mathcal{I}} M_{k(n)}$ and $\prod M_{l(n)}/\bigoplus_{\mathcal{I}} M_{l(n)}$ are isomorphic if and only if there are sets $A,B\in \mathcal{I}$ and a bijection $\nu: \mathbb{N}\setminus A \rightarrow \mathbb{N}\setminus B$ such that $k(\nu(n))=l(n)$ for all $n\in \mathbb{N}\setminus A$.

Moreover if $\Phi$ is such an isomorphism, there exists a bounded linear operator $u:\prod_{n\in\mathbb{N}} M_{k(n)}\rightarrow \prod_{n\in\mathbb{N}} M_{l(n)}$ such that $\pi_{_{\mathcal{I}}}(u)$ is a unitary and $\Phi(\pi_{_{\mathcal{I}}}(a))=\pi_{_{\mathcal{I}}}(uau^{*})$.
 \end{theorem}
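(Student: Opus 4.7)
The plan is to derive the rigidity statement from a combination of the Open Coloring Axiom (OCA) and Martin's Axiom (MA), whose consistency with ZFC is obtained by standard forcing from a measurable cardinal. The forward direction -- that a matching bijection gives rise to an isomorphism of the form $a\mapsto \pi_{_{\mathcal{I}}}(uau^*)$ -- is straightforward: given $\nu:\mathbb{N}\setminus A\to \mathbb{N}\setminus B$ with $k(\nu(n))=l(n)$, the partial isometry $u:\bigoplus_m \mathbb{C}^{k(m)}\to \bigoplus_n \mathbb{C}^{l(n)}$ sending $\mathbb{C}^{k(\nu(n))}$ identically onto $\mathbb{C}^{l(n)}$ for each $n\in\mathbb{N}\setminus A$ and killing the coordinates indexed by $B$ is bounded, satisfies $\pi_{_{\mathcal{I}}}(u^*u)=\pi_{_{\mathcal{I}}}(uu^*)=1$ (since $A,B\in\mathcal{I}$), and $\mathrm{Ad}(u)$ descends to the desired $*$-isomorphism. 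The substantive content is the converse, together with the unitary implementation.

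For the converse I would proceed in three stages. First (lifting): given an isomorphism $\Phi$ of the quotients, construct a bounded linear lift $\widetilde\Phi:\prod_m M_{k(m)}\to \prod_n M_{l(n)}$ with $\pi_{_{\mathcal{I}}}\circ \widetilde\Phi=\Phi\circ\pi_{_{\mathcal{I}}}$, and refine it under OCA$+$MA to a Baire-measurable lift, adapting Farah's techniques from \cite{FaCalkin}. Using that $\mathcal{I}$ is an analytic P-ideal, Solecki's representation theorem yields a lower semicontinuous submeasure $\mu$ with $\mathcal{I}=\mathrm{Exh}(\mu)$, and one can then sharpen $\widetilde\Phi$ to a map that is genuinely $*$-homomorphic on a cofinal family of ``cones'' $\prod_{m\in S} M_{k(m)}$, $S\notin\mathcal{I}$, up to errors small in $\mu$.

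Second (coordinate analysis): invoke the spatial classification of $*$-homomorphisms between matrix algebras -- any nonzero $*$-homomorphism $M_k\to M_l$ forces $k\mid l$ and is, up to multiplicity, spatially inner. Applied to the diagonal embeddings of each $M_{k(m)}$, this shows that $\Phi(\pi_{_{\mathcal{I}}}(M_{k(m)}))$ is, modulo $\mathcal{I}$, supported on a single coordinate $\nu(m)$ with $l(\nu(m))=k(m)$ and implemented there by a unitary $w_m\in M_{k(m)}$ (where Theorem \ref{FV} can be used to translate the rigidity statement about inner structure into a statement in the Boolean algebra $P(\mathbb{N})/\mathcal{I}$ to eliminate the residual error). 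Injectivity of $\Phi$ forces the $\nu(m)$ to be distinct modulo $\mathcal{I}$, and applying the same analysis to $\Phi^{-1}$ yields that $\nu$ extends to a bijection $\mathbb{N}\setminus A\to\mathbb{N}\setminus B$ for some $A,B\in\mathcal{I}$.

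Third (pasting): glue the local unitaries $w_m$ into a single bounded operator $u:\bigoplus_m \mathbb{C}^{k(m)}\to\bigoplus_n \mathbb{C}^{l(n)}$ whose restriction to $\mathbb{C}^{k(\nu^{-1}(n))}$ is $w_{\nu^{-1}(n)}$ for $n\in\mathbb{N}\setminus B$ (under the identification $k(\nu^{-1}(n))=l(n)$) and zero elsewhere. Then $\pi_{_{\mathcal{I}}}(u)$ is a unitary and $\Phi$ agrees with $\pi_{_{\mathcal{I}}}\circ\mathrm{Ad}(u)$ on the norm-dense image of $\bigoplus_m M_{k(m)}$ after quotienting, hence on all of $\prod M_{k(n)}/\bigoplus_{\mathcal{I}}M_{k(n)}$. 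The principal obstacle is the first stage: transferring the Farah-style Borel lifting theorems from the Calkin-algebra and commutative corona contexts to reduced products over arbitrary analytic P-ideals of matrix algebras, and simultaneously controlling perturbations by the submeasure $\mu$ so that the coordinate-by-coordinate rigidity of matrix algebras can actually be brought to bear.
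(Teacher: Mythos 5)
This statement is not proved in the paper at all: it is quoted verbatim from \cite[Corollary 1.1]{Ghasemi} and used as a black box (its only role here is to supply the ``not isomorphic in ZFC'' half of Theorem \ref{main}). So there is no in-paper argument to compare your attempt against; the comparison has to be with the external proof in \cite{Ghasemi}, which is a substantial paper in its own right.

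Judged on its own terms, your proposal is a reasonable description of the general architecture of such rigidity results (lift, analyze coordinatewise, paste), and the forward direction and the pasting step are essentially correct as written. But two genuine gaps remain. First, the entire mathematical content lives in your ``first stage'': producing a Baire-measurable (and then algebraically coherent) lift of $\Phi$ under the forcing axioms, and controlling the perturbations by the submeasure $\mu$ with $\mathcal I=\mathrm{Exh}(\mu)$ so that the coordinatewise rigidity of matrix algebras applies. You acknowledge this is ``the principal obstacle,'' but nothing in the proposal actually addresses it; transferring Farah's lifting machinery from the Calkin algebra and from commutative quotients $P(\mathbb N)/\mathcal I$ to reduced products of matrix algebras over an arbitrary analytic P-ideal is precisely what \cite{Ghasemi} does, and it is not a routine adaptation. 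Second, the role of the measurable cardinal is misattributed: the consistency of OCA together with MA does not require any large cardinals (Todor\v{c}evi\'{c}'s consistency proof works over any model of ZFC). The measurable cardinal enters for a different reason, namely to run the absoluteness/definability arguments needed to make the lifting analysis work uniformly for \emph{all} analytic P-ideals rather than only for concretely given ones. As it stands your text would leave the reader with an incorrect picture of where the hypothesis is used. Also, a small point in your second stage: a nonzero $*$-homomorphism $M_k\to M_l$ gives $k\cdot(\text{multiplicity})\le l$ rather than $k\mid l$ unless it is unital, so the conclusion $l(\nu(m))=k(m)$ really does require playing $\Phi$ against $\Phi^{-1}$, as you indicate, and this needs to be done carefully modulo $\mathcal I$.
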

 Such isomorphisms are  trivial in the sense of \cite{Trivial} and in the sense of Definition \ref{trivial-definition} if $\mathcal I= \mathcal J = Fin$.

\begin{lemma}
There is an increasing sequence of natural numbers $\{k_{\infty}(i): i\in\mathbb{N}\}$ such that for every $\mathcal{F}_{0}$-restricted $\mathcal{L}$-sentence $\psi$
 \begin{equation}
 \nonumber \lim_{i} \psi^{M_{k_{\infty}(i)}}=r_{\psi}
 \end{equation}
 for some real number $r_{\psi}$.
\end{lemma}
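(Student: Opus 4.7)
The plan is to carry out a standard diagonal/compactness argument applied to the sentences from Proposition \ref{F-restricted}. First I would note that the language $\mathcal{L}$ of C*-algebras can be taken to be countable (using, e.g., $\mathbb{Q}[i]$ in place of $\mathbb{C}$ for the scalar symbols, which by continuity suffices to determine values of all $\mathcal{L}$-formulas). Proposition \ref{F-restricted} then yields a countable collection of $\mathcal{F}_0$-restricted $\mathcal{L}$-sentences that is uniformly dense in the set of all $\mathcal{L}$-sentences; enumerate all $\mathcal{F}_0$-restricted $\mathcal{L}$-sentences as $\{\psi_n : n\in\mathbb{N}\}$. For each fixed $n$, the range of $\psi_n$ is a bounded subset of $\mathbb{R}$ depending only on its $\mathcal{F}_0$-restricted parse tree, so the sequence $\{\psi_n^{M_k}\}_{k\in\mathbb{N}}$ is a bounded sequence of reals independently of $k$.

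Next I would construct $\{k_\infty(i)\}$ by nested refinement. Set $K_0=\mathbb{N}$. Given an infinite $K_n\subseteq\mathbb{N}$ along which $\psi_j^{M_k}$ converges for every $j\leq n$, apply the Bolzano--Weierstrass theorem to the bounded real sequence $\{\psi_{n+1}^{M_k}\}_{k\in K_n}$ to extract an infinite $K_{n+1}\subseteq K_n$ along which $\psi_{n+1}^{M_k}$ converges. Finally, diagonalize: choose $k_\infty(i)\in K_i$ recursively so that $k_\infty(i) > k_\infty(i-1)$, which is possible since each $K_i$ is infinite. This makes $\{k_\infty(i)\}$ strictly increasing.

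To verify the conclusion, fix any $\mathcal{F}_0$-restricted $\mathcal{L}$-sentence $\psi = \psi_n$ from the enumeration. For every $i\geq n$ we have $k_\infty(i)\in K_i \subseteq K_n$, so $\{\psi^{M_{k_\infty(i)}} : i\geq n\}$ is a subsequence of the convergent sequence $\{\psi^{M_k}\}_{k\in K_n}$. Hence $\lim_i \psi^{M_{k_\infty(i)}}$ exists in $\mathbb{R}$, giving the real number $r_\psi$.

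I do not expect any substantive obstacle: the argument uses only the countability of the $\mathcal{F}_0$-restricted sentences (from Proposition \ref{F-restricted}) and the boundedness of formula evaluations (built into the framework of logic for metric structures). The mildest point is ensuring that the sequence $\{k_\infty(i)\}$ is strictly increasing while still threading through each $K_i$, but this is automatic from the infinitude of every $K_i$. One could, if desired, reformulate the result as extracting a limit point in the compact product space $\prod_\psi [\min\psi,\max\psi]$ indexed by $\psi$ ranging over the countable set of $\mathcal{F}_0$-restricted sentences; the above is just the elementary diagonal realization of this compactness.
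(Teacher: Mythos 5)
Your proposal is correct and follows essentially the same route as the paper: enumerate the countably many $\mathcal{F}_0$-restricted sentences, use boundedness of each sentence's range to extract nested convergent subsequences via Bolzano--Weierstrass, and diagonalize to obtain $\{k_\infty(i)\}$. The paper's proof is exactly this argument (with $k_\infty(i)=k_i(i)$), so no further comparison is needed.
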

\begin{proof}
Let $\psi_{1}, \psi_{2}, \dots $ be an enumeration of all $\mathcal{F}_{0}$-restricted $\mathcal{L}$-sentences. Starting with $\psi_{1}$, since the range of it is a bounded set, find a sequence $\{k_{1}(i)\}$ such that $ \psi_{1}^{M_{k_{1}(i)}} \rightarrow r_{\psi_{1}}$ for some $r_{\psi_{1}}$. Similarly find a subsequence $\{k_{2}(i)\}$ of $\{k_{1}(i)\}$ such that $ \psi_{2}^{M_{k_{2}(i)}} \rightarrow r_{\psi_{2}}$ for some $r_{\psi_{2}}$, and so on. If we let
\begin{equation}
\nonumber k_{\infty}(i)= k_{i}(i) \qquad\qquad i\in \mathbb{N},
\end{equation}
then $\{k_{\infty}(i)\}$ has the required property.
\end{proof}

\begin{proposition}\label{nontrivial}
For any ideal $\mathcal{I}$ on $\mathbb{N}$ containing all finite sets, if $\{g(i)\}$ and $\{h(i)\}$ are two almost disjoint subsequences of $\{k_{\infty}(i)\}$, then
\begin{equation}
\nonumber \frac{\prod_{i}M_{g(i)}}{\bigoplus_{\mathcal{I}}M_{g(i)}}\equiv \frac{\prod_{i} M_{h(i)}}{\bigoplus_{\mathcal{I}}M_{h_(i)}},
\end{equation}
hence if $\mathcal{I}$ is a layered P-ideal, they are isomorphic under the Continuum Hypothesis, with no trivial isomorphisms between them.
\end{proposition}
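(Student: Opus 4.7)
The plan proceeds in three stages: (i) establish $\mathcal{M}_g\equiv \mathcal{M}_h$ via the metric Feferman--Vaught theorem (Theorem~\ref{FV}), using the defining property of $\{k_\infty(i)\}$ to pass from convergence of sub-formulas to stability of the Boolean images; (ii) upgrade to an isomorphism under CH by combining Theorem~\ref{countable saturation} with a back-and-forth; (iii) rule out trivial isomorphisms by exploiting the almost disjointness of $g$ and $h$.

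For stage (i), fix an $\mathcal{L}$-sentence $\varphi$ and $\epsilon>0$, aiming to bound $|\varphi^{\mathcal{M}_g}-\varphi^{\mathcal{M}_h}|$ by $\epsilon$. Using Proposition~\ref{F-restricted}, choose an $\mathcal{F}_0$-restricted approximation $\tilde\varphi$ within $\epsilon/4$ of $\varphi$, pick $n$ with $1/2^n<\epsilon/4$, and apply Theorem~\ref{FV} to get a determination of $\tilde\varphi$ up to $2^{-n}$ by a tuple $(\sigma_0,\ldots,\sigma_{2^n};\psi_0,\ldots,\psi_{m-1})$. The essential observation is that each $\psi_j$ is itself an $\mathcal{F}_0$-restricted sentence, so by the defining property of $\{k_\infty(i)\}$ we have $\psi_j^{M_{k_\infty(i)}}\to r_{\psi_j}$, and since $g$ and $h$ are subsequences of $k_\infty$, the same limits persist along $g$ and $h$. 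Consequently, for every $(i,j)$ with $r_{\psi_j}\neq i/2^n$, the sets
\[
 X_i^j=\{s:\psi_j^{M_{g(s)}}>i/2^n\}\quad\text{and}\quad Y_i^j=\{s:\psi_j^{M_{h(s)}}>i/2^n\}
\]
are simultaneously cofinite (when $r_{\psi_j}>i/2^n$) or finite (when $r_{\psi_j}<i/2^n$); since $\mathcal{I}\supseteq Fin$, both descend to the same class in $P(\mathbb{N})/\mathcal{I}$, namely $1$ or $0$, and likewise for $\tilde X_i^j$ and $\tilde Y_i^j$. The argument of Proposition~\ref{preservation} then applies verbatim: $\tilde\varphi^{\mathcal{M}_g}>\ell/2^n$ forces $\sigma_\ell$ at the $[\tilde X_i^j]_\mathcal{I}$, which equals $\sigma_\ell$ at the $[\tilde Y_i^j]_\mathcal{I}$, so Lemma~\ref{weaker lemma} delivers $\tilde\varphi^{\mathcal{M}_h}>(\ell-1)/2^n$, and by symmetry $|\tilde\varphi^{\mathcal{M}_g}-\tilde\varphi^{\mathcal{M}_h}|\leq 1/2^n$, giving $|\varphi^{\mathcal{M}_g}-\varphi^{\mathcal{M}_h}|\leq \epsilon$.

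The main obstacle is the boundary case $r_{\psi_j}=i/2^n$, where $[X_i^j]_\mathcal{I}$ and $[Y_i^j]_\mathcal{I}$ may fail to equal $0$ or $1$ and need not coincide. Since only finitely many values $r_{\psi_0},\dots,r_{\psi_{m-1}}$ appear, I would handle this by a perturbation: replace $\tilde\varphi$ by a small $\mathcal{F}_0$-restricted perturbation whose Feferman--Vaught decomposition produces sub-formulas whose limit values avoid the grid $\{i/2^n:0\leq i\leq 2^n\}$. The perturbation costs at most $\epsilon/4$ in distance and is absorbed into the error budget. Alternatively, since at most $m$ boundary coincidences can occur and the $\sigma_\ell$ are monotone, the boundary introduces at most an $O(m/2^n)$ additional slack between $\tilde\varphi^{\mathcal{M}_g}$ and $\tilde\varphi^{\mathcal{M}_h}$, which still vanishes as $n\to\infty$.

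For stage (ii), Theorem~\ref{countable saturation} yields that $\mathcal{M}_g$ and $\mathcal{M}_h$ are countably saturated whenever $\mathcal{I}$ is a layered ideal, and under CH both have density character $\aleph_1$ (as non-separable coronas of separable C*-algebras), so the standard back-and-forth produces an isomorphism between elementarily equivalent countably saturated structures of density $\aleph_1$. For stage (iii), any trivial isomorphism $\Phi:\mathcal{M}_g\to\mathcal{M}_h$ lifts by Definition~\ref{trivial-definition} to a $*$-homomorphism $\Phi_*:\prod_i M_{g(i)}\to\prod_i M_{h(i)}$ descending to $\Phi$. Testing $\Phi_*$ on the minimal central projections corresponding to the factors $M_{g(i)}$, and using that full matrix algebras are simple so the lift must match factors of equal size modulo $\bigoplus_\mathcal{I}$ (as in the structural analysis underlying Theorem~\ref{G}), would force $\{g(i)\}$ and $\{h(i)\}$ to agree modulo finite, contradicting almost disjointness.
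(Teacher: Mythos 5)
Your main line for the elementary equivalence is exactly the paper's: approximate $\varphi$ by an $\mathcal{F}_0$-restricted $\tilde\varphi$, take a determination $(\sigma_0,\dots,\sigma_{2^n};\psi_0,\dots,\psi_{m-1})$ from Theorem \ref{FV}, and use the convergence $\psi_j^{M_{k_\infty(i)}}\to r_{\psi_j}$ to make the sets $X_i^j$ and $Y_i^j$ simultaneously finite or cofinite, hence identified in $P(\mathbb{N})/\mathcal{I}$; your stages (ii) and (iii) likewise track the paper (Theorem \ref{countable saturation} plus the back-and-forth under CH, and Theorem \ref{G} plus almost disjointness against triviality). You also deserve credit for isolating the boundary case $r_{\psi_j}=i/2^n$, which the paper's own proof passes over silently when it asserts $[X_i^j]_{\mathcal{I}}=[Y_i^j]_{\mathcal{I}}$; in that case $X_i^j$ and $Y_i^j$ can indeed be arbitrary, non-equivalent sets.

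The gap is that neither of your two patches for the boundary case works as stated. The perturbation patch gives you no control over the numbers $r_{\psi_j}$: the $\psi_j$ are manufactured by the induction in the proof of Theorem \ref{FV} (for instance the sentences $\theta_k=\sup_z\min\{\cdots\}$ in the quantifier step), so perturbing $\tilde\varphi$ at the top does not let you steer the limit values of these derived sentences off the grid, and the grid $\{i/2^n\}$ is hard-wired into Definition \ref{det} rather than adjustable. The slack-counting patch fails because $m$ is not fixed: it grows with $n$ as the decomposition is built, so $O(m/2^n)$ need not vanish, and in any case the Boolean sentences $\sigma_\ell$ have no Lipschitz-type stability under changing a single argument, so one boundary slot cannot a priori be charged only $1/2^n$. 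The clean repair uses machinery you already invoked. Observe that for every $i\geq 1$ and every $j$ one has $[X_i^j]_{\mathcal{I}}\leq[Y_{i-1}^j]_{\mathcal{I}}$ unconditionally: if $r_{\psi_j}>(i-1)/2^n$ then $Y_{i-1}^j$ is cofinite, and otherwise $r_{\psi_j}<i/2^n$ so $X_i^j$ is finite. Hence from $\tilde\varphi^{\mathcal{M}_g}>\ell/2^n$ one gets $\sigma_\ell$ at the $[\tilde X_i^j]$, then $\sigma_{\ell-1}$ at the $[X_i^j]$ as in the proof of Lemma \ref{weaker lemma}, then by monotonicity and Remark \ref{remark-FV} $\sigma_{\ell-2}$ at the $[Y_i^j]$, whence $\tilde\varphi^{\mathcal{M}_h}>(\ell-2)/2^n$ by Definition \ref{det}(3). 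The loss of $2/2^n$ per direction still tends to $0$ as $n\to\infty$, which is all the argument requires.
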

This together with Theorem \ref{G} implies that these reduced products are not isomorphic in ZFC, and therefore Theorem \ref{main} follows.
\begin{proof}
 Let $\varphi$ be an $\mathcal{L}$-sentence and for $n\geq 1$ find an $\mathcal{F}_{0}$-restricted $\mathcal{L}$-sentence $\tilde{\varphi}$ which is uniformly within $2^{-n}$ of $\varphi$ and it is determined up to $2^{-n}$ by $(\sigma_{0},\dots,\sigma_{2^{n}}; \psi_{0}, \dots,  \psi_{m-1})$. Let
\begin{equation}
\nonumber X^{j}_{i}=\{l\in\mathbb{N}: \psi_{j}^{M_{g(l)}}> i/2^{n}\}
\end{equation}
and
\begin{equation}
\nonumber Y^{j}_{i}=\{l\in\mathbb{N}: \psi_{j}^{M_{h(l)}}> i/2^{n}\}.
\end{equation}
Since each $\psi_{j}$ is an $\mathcal{F}_{0}$-restricted formula and $\lim_{l} \psi_{j}^{M_{k_{\infty}(l)}}=r_{\psi_{j}}$ and $\mathcal{I}$ contains all finite sets, we have $[X^{j}_{i}]_{\mathcal{I}}=[Y^{j}_{i}]_{\mathcal{I}}$. Hence Theorem \ref{FV} implies that $\tilde{\varphi}^{\mathcal{A}_{\mathcal{I}}}=\tilde{\varphi}^{\mathcal{B}_{\mathcal{I}}}$. By uniform density of $\mathcal{F}_{0}$-restricted $\mathcal{L}$-sentences, the result follows.
\end{proof}

The following theorem shows the abundance of different theories of reduced products of matrix algebras.
\begin{theorem}\label{many}
For any ideal $\mathcal{I}$, there are $2^{\aleph_{0}}$-many reduced products of matrix algebras over $\mathcal{I}$ which are pairwise non-elementarily equivalent.
\end{theorem}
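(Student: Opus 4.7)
The plan is to build, for each $\alpha\in 2^{\mathbb{N}}$, a sequence of matrix sizes $\{k_\alpha(i)\}$ encoding $\alpha$ via prime divisibility, and to use Theorem \ref{FV} with an arithmetic family of sentences to distinguish the resulting reduced products pairwise.

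For each $n\geq 2$ I would first produce an $\mathcal{L}$-sentence $\varphi_n$ of the form $\inf_{\bar p,\bar v}\psi_n(\bar p,\bar v)$, where $\psi_n$ is a sum of norm-penalty terms enforcing that $p_1,\dots,p_n$ are pairwise orthogonal projections with $\sum p_i = 1$, pairwise Murray--von Neumann equivalent via $v_j^*v_j=p_1$ and $v_jv_j^*=p_{j+1}$. In $M_k$ the value $\varphi_n^{M_k}$ is $0$ precisely when $n\mid k$ (each $p_i$ then having rank $k/n$), while when $n\nmid k$ one has a uniform lower bound $\varphi_n^{M_k}\geq c_n>0$ independent of $k$: any near-witness is close to exact projections of integer rank, and the Murray--von Neumann equivalence penalty between two exact projections of unequal rank is bounded below by a positive constant depending only on $n$.

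Enumerate the primes $p_1<p_2<\cdots$, and for each $\alpha\in 2^{\mathbb{N}}$ define
\[
k_\alpha(i)\;=\;\prod_{\substack{n\leq i\\ \alpha(n)=1}}p_n,
\]
so that $p_n\mid k_\alpha(i)$ iff $\alpha(n)=1$ and $i\geq n$. Writing $X_n(\alpha)=\{i:p_n\mid k_\alpha(i)\}$, we have $X_n(\alpha)=\{i:i\geq n\}$ when $\alpha(n)=1$ and $X_n(\alpha)=\emptyset$ when $\alpha(n)=0$. Assuming $\mathcal{I}\supseteq\mathrm{Fin}$ (the intended setting for this theorem, as for a principal ideal the reduced product collapses to a single matrix algebra), one has $[\{i\geq n\}]_\mathcal{I}=[\mathbb{N}]_\mathcal{I}\neq 0$, while $[\emptyset]_\mathcal{I}=0$. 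Applying Theorem \ref{FV} to the $\inf$-sentence $\varphi_{p_n}$ in $\mathcal A_\mathcal{I}(\alpha):=\prod_i M_{k_\alpha(i)}/\bigoplus_{\mathcal{I}} M_{k_\alpha(i)}$, and using the coordinatewise nature of $\inf$ together with the uniform lower bound $c_{p_n}$, gives $\varphi_{p_n}^{\mathcal A_\mathcal{I}(\alpha)}=0$ iff witnesses for $\psi_{p_n}$ exist in $M_{k_\alpha(i)}$ on an $\mathcal{I}$-conull set of $i$, iff $X_n(\alpha)$ is $\mathcal{I}$-conull, iff $\alpha(n)=1$; otherwise $\varphi_{p_n}^{\mathcal A_\mathcal{I}(\alpha)}\geq c_{p_n}$.

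Consequently, for any distinct $\alpha,\beta\in 2^{\mathbb{N}}$ and any $n$ with $\alpha(n)\neq\beta(n)$, the sentence $\varphi_{p_n}$ takes different values in $\mathcal A_\mathcal{I}(\alpha)$ and $\mathcal A_\mathcal{I}(\beta)$, so $\{\mathcal A_\mathcal{I}(\alpha):\alpha\in 2^{\mathbb{N}}\}$ is a $2^{\aleph_0}$-sized family of pairwise non-elementarily equivalent reduced products of matrix algebras. The main obstacle is rigorously establishing the uniform lower bound $c_n>0$ in the second paragraph: one has to show that for very large $k$ with $n\nmid k$, a near-witness cannot drive the formula arbitrarily close to zero, which reduces to a stability argument replacing near-projections by nearby exact projections and arguing that Murray--von Neumann equivalence is a discrete condition between exact projections of differing integer rank.
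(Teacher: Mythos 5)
Your proposal is essentially the paper's own argument: both encode a set of primes into the matrix sizes via divisibility and distinguish the reduced products by a sentence whose vanishing asserts a unital embedding of $M_p$ (equivalently, $p$ pairwise orthogonal, pairwise Murray--von Neumann equivalent projections summing to $1$), which holds in $M_k$ exactly when $p\mid k$; the only cosmetic difference is that you index directly by $\alpha\in 2^{\mathbb{N}}$ while the paper runs the same construction over an almost disjoint family of sets of primes. The quantitative step you flag as the main obstacle --- the uniform lower bound $c_n>0$ when $n\nmid k$, via stability of almost-projections and almost-partial-isometries in $M_k$ --- is likewise left implicit in the paper's proof, so your version is, if anything, more explicit about what remains to be checked.
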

\begin{proof}
Let $E=\{p_{1}, p_{2}, \dots\}\subset \mathbb{N}$ be an increasing enumeration of prime numbers. Assume $\{A_{\xi}: \xi<2^{\aleph_{0}}\}$ is an almost disjoint family of subsets of $E$. Let $A_{\xi}=\{n_{1}^{\xi}, n^{\xi}_{2}, \dots\}$ be an increasing enumeration of $A_{\xi}$. For each $\xi < 2^{\aleph_{0}}$ define a sequence $\langle k^{\xi}(n)\rangle$ of natural numbers by
\begin{equation}
\nonumber \langle k^{\xi}(n)\rangle=\langle n_{1}^{\xi}, n_{1}^{\xi}n_{2}^{\xi}, n_{1}^{\xi}n_{2}^{\xi}n_{3}^{\xi}, \dots\rangle.
\end{equation}
We will show that for any distinct $\xi, \eta <2^{\aleph_{0}}$ the reduced products $\prod M_{k^{\xi}(n)}/\bigoplus_{\mathcal{I}}M_{k^{\xi}(n)}$ and $\prod M_{k^{\eta}(n)}/\bigoplus_{\mathcal{I}}M_{k^{\eta}(n)}$ are not elementarily equivalent.

Fix such $\xi$ and $\eta$. Since $A_{\xi}$ and $A_{\eta}$  are almost disjoint, pick $m$ such that $\{n_{m}^{\xi}, n^{\xi}_{m+1}, \dots\}\cap \{n_{1}^{\eta}, n^{\eta}_{2}, \dots\}= \emptyset$. Define a formula $\varphi(\bar{x}, \bar{y})$ by
\begin{eqnarray}
 \varphi(x_1, \dots, x_{n^{\xi}_{m}}, y_{2}, \dots, y_{n^{\xi}_{m}})&=& \sum_{i=1}^{n^{\xi}_{m}}(\|x_{i}^{2}-x_{i}\|+\|x_{i}^{*}-x_{i}\|)+\|\sum_{i=1}^{n^{\xi}_{m}} x_{i}-1\| \nonumber\\
 && +\sum_{i\neq j}^{n^{\xi}_{m}}\|x_{i}x_{j}\|+\sum_{i=2}^{n^{\xi}_{m}}(\|y_{i}y_{i}^{*}- x_{1}\|+ \|y_{i}^{*}y_{i}- x_{i}\|).\nonumber
\end{eqnarray}
For a unital C*-algebra $\mathcal{A}$ and tuples $\bar{a}$ and $\bar{v}$ if $\varphi(\bar{a}, \bar{v})^{\mathcal{A}}=0$ then $a_{1}, a_{2}, \dots, a_{n^{\xi}_{m}}$ are orthogonal pairwise Murray-von Neumann equivalent projections of $\mathcal{A}$, and therefore $M_{n^{\xi}_{m}}$ can be embedded into $\mathcal{A}$.
Since for every $k\geq m$
\begin{equation}
\nonumber n^{\xi}_{m}\mid n^{\xi}_{1}n^{\xi}_{2}\dots n^{\xi}_{k},
\end{equation}
one can easily find a tuple of projections $\bar{a}$ and a tuple of partial isometries $\bar{v}$ in $\prod M_{k^{\xi}(n)}$ such that
\begin{equation}
\nonumber \varphi(\pi_{_{\mathcal{I}}}(\bar{a}), \pi_{_{\mathcal{I}}}(\bar{v}))^{\prod M_{k^{\xi}(n)}/\bigoplus_{\mathcal{I}}M_{k^{\xi}(n)}}=0,
\end{equation}
but it is not possible to find such projections and partial isometries in $\prod M_{k^{\eta}(n)}$ since
\begin{equation}
\nonumber n^{\xi}_{m}\nmid n^{\eta}_{1}n^{\eta}_{2}\dots n^{\eta}_{k}
\end{equation}
for every $k\in \mathbb{N}$. Hence $\prod M_{k^{\xi}(n)}/\bigoplus_{\mathcal{I}}M_{k^{\xi}(n)} \not\equiv\prod M_{k^{\eta}(n)}/\bigoplus_{\mathcal{I}}M_{k^{\eta}(n)}$.
\end{proof}

\section{further remarks and questions}
For a locally compact Hausdorff topological space $X$ and a metric structure $\mathcal{A}$ the \emph{continuous reduced products} $C_{b}(X, \mathcal{A})/C_{0}(X,\mathcal{A})$ are studied as models for metric structures in \cite{FaShRig}, where in particular it has been shown that certain continuous reduced products, e.g., $C([0,1)^{*})$, are \emph{countably saturated}. In general $C_{b}(X, \mathcal{A})$ is a submodel of $\prod_{t\in X} \mathcal{A}$ and
 one may hope to use a similar approach as in section 4 in order to prove the following preservation (of $\equiv$) question.

\textbf{Question (1).} Assume $\mathcal{A}$ and $\mathcal{B}$ are elementarily equivalent metric structures and $X$ is a locally compact, non-compact Polish space. Are  $C_{b}(X, \mathcal{A})/C_{0}(X,\mathcal{A})$ and $C_{b}(X, \mathcal{B})/C_{0}(X,\mathcal{B})$  elementarily equivalent?

Note that if $X$ is a discrete space (e.g., $\mathbb{N}$) this follows from Proposition \ref{preservation} since $C_{b}(X, \mathcal{A})/C_{0}(X,\mathcal{A})\cong \prod_{t\in X}\mathcal{A}/\bigoplus \mathcal{A}$.

 In \cite{7authors} the authors showed the existence of two C*-algebras $\mathcal{A}$ and $\mathcal{B}$ such that $\mathcal{A}\equiv \mathcal{B}$, where $C([0,1])\otimes \mathcal{A} \not\equiv C([0,1])\otimes \mathcal{B}$, i.e., tensor products in the category of C*-algebras, do not preserve elementary equivalence. In general it is not clear ``how the theory of $C_{0}(X,\mathcal{A})$ is related to the theory of $\mathcal{A}$".

\textbf{Question (2).} Assume $\mathcal{A}$ and $\mathcal{B}$ are elementarily equivalent C*-algebras. For which locally compact, Hausdorff spaces, like $X$,  $C_{0}(X)\otimes \mathcal{A}\equiv C_{0}(X)\otimes \mathcal{B}$ is true?

We conclude with an interesting observation which might give some insight to the previous question.

\textbf{ The Cone and Suspension Algebras.}  Let $\mathcal{A}$ be a C*-algebra. The \emph{cone} $C\mathcal{A}= C_{0}((0,1], \mathcal{A})$ and \emph{suspension} $S\mathcal{A}=C_{0}((0,1), \mathcal{A})$  over $\mathcal{A}$ are the most important examples of contractible and subcontractible C*-algebras (\cite{Black}). Since $S\mathcal{A}\subset C\mathcal{A}$ and $C\mathcal{A}$ is homotopic to $\{0\}$ (contractible) by a well-known result of D. Voiculescu (\cite{Voi}) both $C\mathcal{A}$ and $S\mathcal{A}$ are \emph{quasidiagonal} C*-algebras. Every quasidiagonal C*-algebra embeds into a reduced product of full matrix algebras over the Fr\'{e}chet ideal, $\prod M_{k(n)}/\bigoplus_{\mathcal{I}} M_{k(n)}$ for some sequence $\{k(n)\}$ (such C*-algebras are called MF, see e.g., \cite{Brown} and \cite{Black}). In general it is easy to check that if a metric structure $\mathcal{A}$ embeds into $\mathcal{B}$ then the universal theory of $\mathcal{A}$, $Th_{\forall}(\mathcal{A})$, contains the universal theory of $\mathcal{B}$, $Th_{\forall}(\mathcal{B})$. Hence for any C*-algebra $\mathcal{A}$

\begin{equation}
\nonumber  C\mathcal{A} \hookrightarrow \prod M_{k(n)}/\bigoplus M_{k(n)}
\end{equation}
for some $\{k(n)\}$, which implies that

\begin{equation}
\nonumber Th_{\forall}(C\mathcal{A}) \supseteq  Th_{\forall}(\prod M_{k(n)}/\bigoplus M_{k(n)}).
\end{equation}

\end{document}